\newtheorem{theorem}{Theorem}
\newtheorem{lemma}[theorem]{Lemma}
\newtheorem{proposition}[theorem]{Proposition}
\newtheorem{corollary}[theorem]{Corollary}
\numberwithin{equation}{section}
\begin{document}

\title[One modification of the martingale transform]{One modification of the martingale transform
and its applications to paraproducts and stochastic integrals}
\thanks{This work has been fully supported by the Croatian Science Foundation under the project 3526.}

\author{Vjekoslav Kova\v{c}}
\address{Vjekoslav Kova\v{c}, Department of Mathematics, Faculty of Science, University of Zagreb, Bijeni\v{c}ka cesta 30, 10000 Zagreb, Croatia}
\email{vjekovac@math.hr}

\author{Kristina Ana \v{S}kreb}
\address{Kristina Ana \v{S}kreb, Faculty of Civil Engineering, University of Zagreb, Fra Andrije Ka\v{c}i\'{c}a Mio\v{s}i\'{c}a 26, 10000 Zagreb, Croatia}
\email{kskreb@grad.hr}

\subjclass[2010]{Primary 60G42; Secondary 42B15, 60G46, 60H05}
\keywords{martingale transform, paraproduct, multilinear estimate, the Bellman function, space of homogeneous type,
stochastic integral, the Bichteler-Dellacherie theorem, the It\={o} construction}

\begin{abstract}
In this paper we introduce a variant of Burkholder's martingale transform associated with two martingales with respect to
different filtrations.
Even though the classical martingale techniques cannot be applied, we show that the discussed transformation still satisfies some
expected $\mathrm{L}^p$ estimates.
Then we apply the obtained inequalities to general-dilation twisted paraproducts, particular instances of which
have already appeared in the literature.
As another application we construct stochastic integrals $\int_{0}^{t}H_s d(X_s Y_s)$ associated with certain
continuous-time martingales $(X_t)_{t\geq 0}$ and $(Y_t)_{t\geq 0}$.
The process $(X_t Y_t)_{t\geq 0}$ is shown to be a ``good integrator'', although it is not necessarily a semimartingale,
or even adapted to any convenient filtration.
\end{abstract}

\maketitle

\section{Introduction and statement of the results}

\subsection{Discrete-time estimates}
\label{subsecintro}
If $(U_k)_{k=0}^{\infty}$ and $(V_k)_{k=0}^{\infty}$ are two completely arbitrary discrete-time stochastic processes,
let us agree to write \hbox{$(U\cdot V)_{n=0}^{\infty}$} for a new process defined by
\begin{equation}\label{eqdotdef}
(U\cdot V)_n := \sum_{k=1}^{n} U_{k-1} (V_{k}-V_{k-1})
\end{equation}
and adopt the convention $(U\cdot V)_0=0$.
In the particular case when $(V_k)_{k=0}^{\infty}$ is a martingale and $(U_k)_{k=0}^{\infty}$
is bounded and adapted with respect to the same filtration, the above process is precisely Burkholder's \emph{martingale transform} \cite{Burk1}.
It plays an important role in finding sharp estimates for singular integral operators \cite{BW}, the theory of UMD spaces \cite{Burk3},
and inequalities for stochastic integrals \cite{Burk2}.
See \cite{Burk4} and \cite{Banu} for more details and references to the extensive literature.
However, here we consider a different setting, which is motivated by a probabilistic technique in the proof of boundedness
of a certain two-dimensional paraproduct-type operator \cite{Kov2}.

Let us begin by describing a special case of two filtrations $(\mathcal{F}_k)_{k=0}^{\infty}$ and $(\mathcal{G}_k)_{k=0}^{\infty}$
that will be used throughout this work.
Suppose that the underlying probability space is the product
$(\Omega_1\!\times\!\Omega_2,\mathcal{A}\otimes\mathcal{B},\mathbb{P}_1\!\times\!\mathbb{P}_2)$
of two probability spaces $(\Omega_1,\mathcal{A},\mathbb{P}_1)$ and $(\Omega_2,\mathcal{B},\mathbb{P}_2)$.
Whenever we write $\mathbb{E}$ alone, it will be understood that the expectation is taken with respect to the product measure
$\mathbb{P}=\mathbb{P}_1\times\mathbb{P}_2$. Similarly we do with the Lebesgue spaces and their norms.
Suppose that we are also given two filtrations $(\mathcal{A}_k)_{k=0}^{\infty}$ and $(\mathcal{B}_k)_{k=0}^{\infty}$ of
$\mathcal{A}$ and $\mathcal{B}$ respectively and denote
\begin{equation}\label{eqfiltrations}
\mathcal{F}_k := \mathcal{A}_k\otimes\mathcal{B}, \quad \mathcal{G}_k := \mathcal{A}\otimes\mathcal{B}_k
\end{equation}
for each nonnegative integer $k$.
We can think of $(\mathcal{F}_k)_{k=0}^{\infty}$ and $(\mathcal{G}_k)_{k=0}^{\infty}$ as being
a ``horizontal'' and a ``vertical'' filtration of $\mathcal{A}\otimes\mathcal{B}$ respectively.
We remark that the two filtrations in \eqref{eqfiltrations} are not necessarily independent --- in fact they rarely are.
Proposition~\ref{propcommuting} in the closing section will help us develop the intuition
by showing that sigma algebras $\mathcal{F}_k$ and $\mathcal{G}_\ell$ are indeed independent
conditionally on $\mathcal{F}_k\cap\mathcal{G}_\ell$.

Suppose that $(X_k)_{k=0}^{\infty}$ is a real-valued martingale with respect to the filtration $(\mathcal{F}_k)_{k=0}^{\infty}$ and
that $(Y_k)_{k=0}^{\infty}$ is a real-valued martingale with respect to $(\mathcal{G}_k)_{k=0}^{\infty}$.
Finally, let $(K_k)_{k=0}^{\infty}$ be an adapted process with respect to the filtration $(\mathcal{F}_k\cap\mathcal{G}_k)_{k=0}^{\infty}$.
For processes $((KX\cdot Y)_n)_{n=0}^{\infty}$ and $((K\cdot XY)_n)_{n=0}^{\infty}$ our Definition \eqref{eqdotdef} unfolds as
\begin{align*}
(KX\cdot Y)_n & = \sum_{k=1}^{n} K_{k-1} X_{k-1} (Y_{k}-Y_{k-1}) , \\
(K\cdot XY)_n & = \sum_{k=1}^{n} K_{k-1} (X_{k}Y_{k}-X_{k-1}Y_{k-1}) .
\end{align*}
These processes are no longer adapted to any convenient filtration, so they cannot be treated in the same way as Burkholder's transform.
We further discuss those difficulties in Section~\ref{secclosing}.
Nevertheless, they still prove to be useful and they still satisfy some $\mathrm{L}^p$ estimates.

Let us adopt the notation $\|U\|_{\mathrm{L}^{p}}:=(\mathbb{E}|U|^p)^{1/p}$ for any random variable $U$ and $1\leq p<\infty$,
while $\|U\|_{\mathrm{L}^{\infty}}$ is simply defined as the essential supremum of $|U|$.
For some of the desired estimates we will need that the intersection of filtrations satisfies the following ``uniform growth'' property:
there exists a constant $A$ such that
\begin{equation}\label{eqjumpscondition}
\big\|\mathbb{E}(U|\mathcal{F}_{k+1}\cap\mathcal{G}_{k+1})\big\|_{\mathrm{L}^\infty}
\leq A\,\big\|\mathbb{E}(U|\mathcal{F}_k\cap\mathcal{G}_k)\big\|_{\mathrm{L}^\infty}
\end{equation}
for any random variable $U\geq 0$ and any integer $k\geq 0$.

\begin{theorem}\label{theoremdiscrete}
\begin{itemize}
\item[(a)]
There exists an absolute constant $C$ such that for each nonnegative integer $n$ we have the inequalities:
\begin{align}
\|(KX\cdot Y)_n \|_{\mathrm{L}^{4/3}} & \leq C \,\|X_n\|_{\mathrm{L}^{4}}\|(K\cdot Y)_n\|_{\mathrm{L}^{2}},
\label{eqdiscreteestimate1} \\
\|(K\cdot XY)_n \|_{\mathrm{L}^{4/3}} & \leq C \,\big(\|X_n\|_{\mathrm{L}^{4}}\|(K\cdot Y)_n\|_{\mathrm{L}^{2}}
+\|Y_n\|_{\mathrm{L}^{4}}\|(K\cdot X)_n\|_{\mathrm{L}^{2}}\big).
\label{eqdiscreteestimate}
\end{align}
\item[(b)]
Let us additionally suppose that the filtration $(\mathcal{F}_k\cap\mathcal{G}_k)_{k=0}^{\infty}$ satisfies Condition \eqref{eqjumpscondition}.
For any exponents $p,q,r$ from the range $1/r=1/p+1/q$,\, $1<r<2<p,q<\infty$ there exists a constant $C_{p,q,r}$
such that for each nonnegative integer $n$ we have
\begin{equation}\label{eqdiscreteestimate3}
\|(KX\cdot Y)_n \|_{\mathrm{L}^{r}} \leq C_{p,q,r}\, A^{3/2}\, \big(\max_{0\leq k\leq n-1}\|K_k\|_{\mathrm{L}^\infty}\big)\,
\|X_n\|_{\mathrm{L}^{p}}\|Y_n\|_{\mathrm{L}^{q}}.
\end{equation}
\end{itemize}
\end{theorem}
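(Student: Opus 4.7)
Write $V_n:=(K\cdot Y)_n$, which is a $\mathcal{G}$-martingale (since $K_{k-1}$ is $\mathcal{G}_{k-1}$-measurable and $Y$ is a $\mathcal{G}$-martingale). Using the shorthand $dX_k:=X_k-X_{k-1}$ and similarly for $V$, the discrete product rule
\[
X_kV_k-X_{k-1}V_{k-1}=X_{k-1}\,dV_k+V_{k-1}\,dX_k+dX_k\,dV_k
\]
telescopes to the key decomposition
\[
(KX\cdot Y)_n \;=\; X_nV_n \;-\; T_n \;-\; R_n,
\]
where $T_n:=\sum_{k=1}^n V_{k-1}\,dX_k$ and $R_n:=\sum_{k=1}^n K_{k-1}\,dX_k\,dY_k$.

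Two of the three pieces are handled uniformly. H\"older with exponents $(4,2)$ gives $\|X_nV_n\|_{\mathrm{L}^{4/3}}\leq\|X_n\|_{\mathrm{L}^4}\|V_n\|_{\mathrm{L}^2}$. Pointwise Cauchy--Schwarz yields $|R_n|\leq S_{\mathcal F}(X)_n\,S_{\mathcal G}(V)_n$, with $S_{\mathcal F}$ and $S_{\mathcal G}$ the martingale square functions of $X$ and $V$ with respect to their natural filtrations; H\"older in $\mathrm{L}^{4/3}$ combined with Burkholder--Gundy in $\mathrm{L}^4$ and $\mathrm{L}^2$ gives $\|R_n\|_{\mathrm{L}^{4/3}}\leq C\|X_n\|_{\mathrm{L}^4}\|V_n\|_{\mathrm{L}^2}$. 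The second inequality of part (a), namely \eqref{eqdiscreteestimate}, then follows from the Leibniz identity
\[
(K\cdot XY)_n=(KX\cdot Y)_n+(KY\cdot X)_n+R_n,
\]
by applying \eqref{eqdiscreteestimate1} twice (once verbatim, once with the roles of $X,Y$ and of the two filtrations interchanged) and bounding $R_n$ as above --- or, for the symmetric half of the right-hand side, via $|R_n|\leq S_{\mathcal G}(Y)_n\,S_{\mathcal F}((K\cdot X))_n$.

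The real difficulty is to bound $T_n=\sum_k V_{k-1}\,dX_k$ by the same product $C\|X_n\|_{\mathrm{L}^4}\|V_n\|_{\mathrm{L}^2}$. Although $V_{k-1}$ is $\mathcal{G}_{k-1}$-measurable, it is not $\mathcal{F}_{k-1}$-measurable, so $T_n$ is not a Burkholder-type transform of $X$ and the classical $\mathrm{L}^p$-theory does not apply. My plan is to establish this bound by a Bellman-function argument in the spirit of \cite{Kov2}: introduce a function $B$ of the currently held values $(X_k,V_k,T_k)$ together with accumulators for $\sum_{j\leq k}(dX_j)^2$ and $\sum_{j\leq k}K_{j-1}^2(dY_j)^2$, verify that $B$ majorises $|T_k|^{4/3}$ and satisfies a concavity/size condition that makes $\mathbb{E}\,B$ non-increasing along the joint evolution of the two martingales, and read off the desired estimate at step $n$. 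The conditional independence of $\mathcal{F}_k$ and $\mathcal{G}_k$ given $\mathcal{F}_k\cap\mathcal{G}_k$ (Proposition~\ref{propcommuting}) is precisely what makes the required conditional identities available in the absence of a single filtration to which everything is adapted.

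For part (b), condition \eqref{eqjumpscondition} acts as a uniform doubling hypothesis on the intersection filtration and opens up a John--Nirenberg / good-$\lambda$ argument. My strategy is to normalise $\max_k\|K_k\|_{\mathrm{L}^\infty}\leq 1$, use \eqref{eqjumpscondition} to build an atomic decomposition adapted to $(\mathcal{F}_k\cap\mathcal{G}_k)_k$, and extract a weak-type endpoint for $(KX\cdot Y)$ that interpolates against the H\"older-rescaled version of \eqref{eqdiscreteestimate1} to cover the whole range $1<r<2<p,q<\infty$. The constant $A^{3/2}$ should arise as the accumulated cost of three invocations of the jump condition: one for passing to a maximal operator and one for each of the two subsequent Marcinkiewicz-type interpolation steps.
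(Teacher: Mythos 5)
Your reductions (the decomposition $(KX\cdot Y)_n = X_nV_n - T_n - R_n$ with $V=K\cdot Y$; the H\"older/Cauchy--Schwarz/BDG bounds on $X_nV_n$ and $R_n$; the Leibniz deduction of \eqref{eqdiscreteestimate} from \eqref{eqdiscreteestimate1}) are correct, and the last one coincides with the paper's treatment of the ``diagonal'' term in \eqref{eqsplitting}. However, the crux of both (a) and (b) is left as a plan rather than a proof, and the plan for (a) has a structural problem.

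Concerning (a): you isolate $T_n=\sum_k V_{k-1}\,dX_k$ as ``the real difficulty.'' But this is not a genuine reduction --- $T_n$ is exactly the mirror object $(V\cdot X)_n$, and applying the same product rule once more gives $T_n = V_nX_n - (X\cdot V)_n - R_n = V_nX_n - (KX\cdot Y)_n - R_n$; so bounding $T_n$ is literally equivalent to bounding the original $(KX\cdot Y)_n$. The decomposition buys no new leverage. The substance of part (a) is the construction of an explicit control process (the paper's $\beta_k(X,Y,Z)$ built from the fiberwise operators $\mathbb{E}_{\mathcal{A}_k}^{\omega'\to\omega}$, $\mathbb{E}_{\mathcal{B}_k}^{\omega'\to\omega}$, together with the telescoping inequality \eqref{eqdiscauxalpha} and the pointwise bound \eqref{eqdiscauxbeta}), and the paper obtains it only after dualizing against $Z\in\mathrm{L}^4$ and expressing $\mathbb{E}((X\cdot Y)_nZ)$ as $\sum_k\alpha_k$ with $\alpha_k$ adapted to $\mathcal{F}_k\cap\mathcal{G}_k$. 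You assert that a Bellman function of $(X_k,V_k,T_k)$ plus square-function accumulators will do the job, but no candidate is written down, no concavity/supersolution inequality is verified, and it is not at all clear that an undualized, three-variable ansatz of the form you describe closes: the paper's $\beta_k$ involves iterated fiberwise conditional expectations of products such as $\gamma_k(V,W)$, which is considerably more intricate than a function of the current state. As it stands the proposal restates the problem in equivalent form and defers the key argument.

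Concerning (b): your ``John--Nirenberg / atomic decomposition / Marcinkiewicz interpolation'' strategy is not what the paper does and is also not worked out. The paper runs a direct stopping-time argument: from Proposition~\ref{propbellman} it derives the localized inequality \eqref{eqstoptimesineq}, introduces the martingales $\mathcal{X}_k=\mathbb{E}(X^2|\mathcal{H}_k)$, $\mathcal{Y}_k$, $\mathcal{Z}_k$ and the triple family of level-set stopping times $T^{\mathcal{X}}_m, T^{\mathcal{Y}}_m, T^{\mathcal{Z}}_m$, sums over the resulting random intervals, and closes with Doob's maximal inequality and a three-way splitting of the sum over $(m_1,m_2,m_3)$. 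Condition \eqref{eqjumpscondition} enters exactly to control the overshoot $\mathcal{X}_\tau\leq A\,2^{2m_1}$ (and similarly for $\mathcal{Y},\mathcal{Z}$) at a stopping time, and $A^{3/2}$ is the product of the three resulting $A^{1/2}$ factors in \eqref{eqstoptimesineq}; it is not the cost of interpolation steps. More importantly, it is not clear that your weak-endpoint/interpolation scheme would actually produce the full range $1<r<2<p,q<\infty$, since \eqref{eqdiscreteestimate1} lives at the single exponent tuple $(4,2,4/3)$ and there is no second strong-type estimate on offer to interpolate against. You would need to supply the weak-type input and justify the multilinear interpolation in detail before this could count as a proof.
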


We emphasize that the constants $C$ and $C_{p,q,r}$ do not depend on the filtrations or the processes involved.
We do not know if Condition \eqref{eqjumpscondition} is necessary in order to have Estimate \eqref{eqdiscreteestimate3} ---
we simply need it for the stopping time argument in our proof.
On the other hand, in Estimates \eqref{eqdiscreteestimate1} and \eqref{eqdiscreteestimate} we have avoided imposing any conditions on the martingales.

A typical example of a filtration with uniformly bounded jumps is the one generated by dyadic cubes in $\mathbb{R}^d$.
Indeed, Inequality \eqref{eqdiscreteestimate3} was already established in \cite{Kov2} for the particular case when
$(\mathcal{A}_k)_{k=0}^{\infty}$ and $(\mathcal{B}_k)_{k=0}^{\infty}$ are both just the standard one-dimensional dyadic filtration.

Let us now turn to applications of Theorem~\ref{theoremdiscrete}, which motivate our general setting.

\subsection{An application to general-dilation twisted paraproducts}
\label{subsecparaprod}
In this subsection we use the advantage of having at our disposal Estimate \eqref{eqdiscreteestimate3} for rather general martingales.

A somewhat unusual variant of paraproduct, the so-called \emph{twisted paraproduct},
was suggested by Demeter and Thiele in \cite{DT} as a particular case of the
two-dimensional variant of the well-known bilinear Hilbert transform \cite{LT1}, \cite{LT2}.
We intend to generalize the $\mathrm{L}^p$ estimate for this operator,
which was the main result of paper \cite{Kov2}, to the setting of general groups of dilations.
Even though the proof of Corollary \ref{cortwisted} below follows the same outline as the one in \cite{Kov2},
we want to present how Theorem \ref{theoremdiscrete} can be translated to convolution-type operators
using the construction by Christ \cite{Chr} of dyadic cubes in a space of homogeneous type
and the full strength of the square function estimate of Jones, Seeger, and Wright \cite{JSW}.

Let us begin by introducing a general dilation structure and we borrow much of the setting from \cite{SW} and \cite{JSW}.
Take two positive integers $d_1,d_2$ and set $d=d_1+d_2$, so that the Euclidean space $\mathbb{R}^d$ splits as $\mathbb{R}^{d_1}\times\mathbb{R}^{d_2}$.
Let $(\delta_{t}^{(1)})_{t>0}$ and $(\delta_{t}^{(2)})_{t>0}$ be two multiplicative single-parameter \emph{groups of dilations} on
$\mathbb{R}^{d_1}$ and $\mathbb{R}^{d_2}$ respectively. In other words,
\begin{itemize}
\item $\delta_{t}^{(j)}\colon\mathbb{R}^{d_j}\to\mathbb{R}^{d_j}$ is a linear transformation for $t>0$, $j=1,2$,
\item $\delta_{1}^{(j)}$ is the identity and $\delta_{st}^{(j)}=\delta_{s}^{(j)}\delta_{t}^{(j)}$ for $s,t>0$, $j=1,2$,
\item the action $\langle 0,+\infty\rangle\times\mathbb{R}^{d_j}\to\mathbb{R}^{d_j}$, \,$(t,x)\mapsto\delta_{t}^{(j)}x$ is continuous for $j=1,2$,
\item $\lim_{t\to 0}\delta_{t}^{(j)}x = \mathbf{0}$ for $x\in\mathbb{R}^{d_j}$, $j=1,2$.
\end{itemize}
It is easily seen that the dilation groups must be of the form
$$ \delta_{t}^{(j)} = t^{A_j} = e^{(\log t)A_j} , \quad t>0,\ j=1,2 $$
for some matrices $A_j\in\mathrm{M}_{d_j}(\mathbb{R})$ such that all of their eigenvalues have positive real parts.
A familiar example are \emph{non-isotropic dilations} on $\mathbb{R}^n$, defined as
$$ \delta_{t}(x_1,\ldots,x_n) := \big(t^{a_1}x_1,\ldots,t^{a_n}x_n\big) , $$
in which case the corresponding generator-matrix is $\mathop{\mathrm{diag}}(a_1,\ldots,a_n)$.

Next, take two Schwartz functions $\varphi^{(j)}\colon\mathbb{R}^{d_j}\to\mathbb{C}$, $j=1,2$ normalized by
$\int_{\mathbb{R}^{d_j}}\varphi^{(j)}(x)dx=1$.
Their dilates will be denoted
$$ \varphi_{t}^{(j)}(x):= \big(\det\delta_{t}^{(j)}\big)\,\varphi^{(j)}\big(\delta_{t}^{(j)}x\big),
\quad x\in\mathbb{R}^{d_j},\ t>0,\ j=1,2. $$
Let us discretize the scales by taking some parameters $0<\alpha,\beta<1$ and considering their integer powers.
A particular choice of $\alpha,\beta$ will come up in the proof,
similarly as it is rather canonical to take $\alpha=\beta=2$ for the standard dyadic dilation structure.
Therefore, for any integer $k$ denote by $\mathrm{P}_{k}^{(1)}, \mathrm{P}_{k}^{(2)}$ the following smooth ``projections'',
\begin{align*}
(\mathrm{P}_{k}^{(1)}f)(x,y) & := \int_{\mathbb{R}^{d_1}} f(x-u,y) \varphi_{\alpha^k}^{(1)}(u) du , \\
(\mathrm{P}_{k}^{(2)}f)(x,y) & := \int_{\mathbb{R}^{d_2}} f(x,y-v) \varphi_{\beta^k}^{(2)}(v) dv ,
\end{align*}
i.e.\@ $\mathrm{P}_{k}^{(j)}f$, $j=1,2$ are partial convolutions of $f$ with the dilates of $\varphi^{(j)}$.
In analogy with \cite{Kov2} it is natural to define a \emph{general-dilation twisted paraproduct} as
$$ T_{\alpha,\beta}(f,g) := \sum_{k\in\mathbb{Z}}\, \big(\mathrm{P}_{k}^{(1)}f\big) \,\big(\mathrm{P}_{k+1}^{(2)}g-\mathrm{P}_{k}^{(2)}g\big) . $$
This expression is well-defined for instance for compactly supported $\mathrm{C}^1$ functions $f$ and $g$.
A noteworthy feature of this paraproduct-type operator is that it possesses enough cancellation, although
the projections $\mathrm{P}_{k}^{(1)}$ and the differences $\mathrm{P}_{k+1}^{(2)}-\mathrm{P}_{k}^{(2)}$ act in separate sets of variables.
We can formulate a boundedness result in this general setting.

\begin{corollary}\label{cortwisted}
There exist parameters $0<\alpha,\beta<1$, depending only on the dilation structure, such that the estimate
\begin{equation}\label{eqtwistedest}
\big\|T_{\alpha,\beta}(f,g)\big\|_{\mathrm{L}^r(\mathbb{R}^{d})}
\leq C \|f\|_{\mathrm{L}^p(\mathbb{R}^{d})} \|g\|_{\mathrm{L}^q(\mathbb{R}^{d})}
\end{equation}
holds whenever $1<r<2<p,q<\infty$ and $1/r=1/p+1/q$, with a constant $C$
depending on $p,q,r$, $\alpha,\beta$, $\varphi^{(1)},\varphi^{(2)}$, and the dilation groups.
\end{corollary}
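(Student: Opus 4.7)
The plan is to reduce the corollary to Theorem~\ref{theoremdiscrete}(b) by replacing each smooth convolution projection $\mathrm{P}_k^{(j)}$ with a dyadic conditional expectation $\mathrm{E}_k^{(j)}$, and to control the resulting errors via the square function theorem of Jones--Seeger--Wright \cite{JSW} together with Burkholder's martingale square function estimate.

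First I would invoke Christ's construction \cite{Chr} on each space of homogeneous type $(\mathbb{R}^{d_j},\rho_j,dx)$, where $\rho_j$ is the quasi-metric naturally associated with the dilation group $(\delta_{t}^{(j)})_{t>0}$. This yields systems of generalized dyadic cubes $\mathcal{D}_k^{(j)}$ at geometric scales $\alpha^k$ and $\beta^k$; the parameters $\alpha,\beta\in(0,1)$ are chosen sufficiently small, in terms of the doubling constants, the Schwartz tails of $\varphi^{(j)}$, and the constants from Christ's theorem, so that the $\mathrm{L}^p$ square function estimate for the ``smooth minus dyadic'' error $\mathrm{P}_k^{(j)}-\mathrm{E}_k^{(j)}$ is available from \cite{JSW}. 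Setting $\mathcal{A}_k,\mathcal{B}_k$ to be the sigma-algebras generated by $\mathcal{D}_k^{(1)},\mathcal{D}_k^{(2)}$ and $\mathcal{F}_k:=\mathcal{A}_k\otimes\mathcal{B}$, $\mathcal{G}_k:=\mathcal{A}\otimes\mathcal{B}_k$ on a large fixed cube containing the supports of $f$ and $g$, we obtain $\mathcal{F}_k\cap\mathcal{G}_k=\mathcal{A}_k\otimes\mathcal{B}_k$, and the uniformly bounded parent-to-child volume ratios of Christ's cubes yield the uniform growth condition \eqref{eqjumpscondition}.

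Next I would write $\mathrm{P}_k^{(j)}=\mathrm{E}_k^{(j)}+\mathrm{Q}_k^{(j)}$ and expand $T_{\alpha,\beta}(f,g)$ into four pieces according to which of $\mathrm{E}$ or $\mathrm{Q}$ appears on each side. The leading piece $\sum_k (\mathrm{E}_k^{(1)}f)(\mathrm{E}_{k+1}^{(2)}g-\mathrm{E}_k^{(2)}g)$ is exactly $(KX\cdot Y)_\infty$ with $K\equiv 1$, $X_k=\mathrm{E}_k^{(1)}f$ a martingale with respect to $(\mathcal{F}_k)$, and $Y_k=\mathrm{E}_k^{(2)}g$ a martingale with respect to $(\mathcal{G}_k)$, so Estimate \eqref{eqdiscreteestimate3} together with the Jensen bounds $\|X_k\|_{\mathrm{L}^p}\leq\|f\|_{\mathrm{L}^p}$, $\|Y_k\|_{\mathrm{L}^q}\leq\|g\|_{\mathrm{L}^q}$ yields the required bound for this piece. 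The piece in which $\mathrm{Q}$ stands on the $f$-side and the martingale difference stands on the $g$-side, and the doubly-$\mathrm{Q}$ piece, are estimated by pointwise Cauchy--Schwarz in $k$ followed by Hölder in $(x,y)$; this produces products of the martingale square function of $g$ (controlled by Burkholder) or the JSW square function of $g$, against the JSW square function of $f$. The remaining mixed piece $\sum_k (\mathrm{E}_k^{(1)}f)(\mathrm{Q}_{k+1}^{(2)}g-\mathrm{Q}_k^{(2)}g)$ is handled similarly after one summation by parts in $k$, which transfers the difference onto the $\mathrm{E}$-factor and turns it into a martingale-difference sequence.

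The main obstacle is the simultaneous calibration of $\alpha,\beta$, the verification of \eqref{eqjumpscondition} for the resulting Christ filtration, and the non-isotropic JSW control on $\mathrm{P}_k^{(j)}-\mathrm{E}_k^{(j)}$; once these three ingredients are in place the decomposition itself is bookkeeping, a direct adaptation of the dyadic one-dimensional argument from \cite{Kov2}, concluded by a routine truncation of the sum over $k\in\mathbb{Z}$ on compactly supported $\mathrm{C}^1$ inputs and passage to the limit.
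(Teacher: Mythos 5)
Your proof is essentially the same as the paper's: reduce to the dyadic model operator by choosing Christ's generalized dyadic cubes for the two dilation structures, apply Theorem~\ref{theoremdiscrete}(b) to the martingale paraproduct $\sum_k \mathbb{E}(f|\mathcal{F}_k)\bigl(\mathbb{E}(g|\mathcal{G}_{k+1})-\mathbb{E}(g|\mathcal{G}_k)\bigr)$, and control the error between the convolution and martingale versions via the Jones--Seeger--Wright square function together with the martingale square function.

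Two small points of comparison. First, your four-piece decomposition (writing $\mathrm{P}_k^{(j)}=\mathrm{E}_k^{(j)}+\mathrm{Q}_k^{(j)}$ on both sides and expanding) is algebraically equivalent to the paper's two-term Abel-summation identity for $T_n-\widetilde{T}_n$; both isolate the same square functions, and your extra summation by parts on the $\mathrm{E}^{(1)}\cdot\Delta\mathrm{Q}^{(2)}$ piece is exactly what the paper's identity encodes. The paper additionally groups so that one factor is the ``smooth'' square function $\bigl(\sum_k|\mathrm{P}_{k+1}^{(2)}g-\mathrm{P}_k^{(2)}g|^2\bigr)^{1/2}$, bounded by Duoandikoetxea--Rubio~de~Francia, whereas your grouping would produce JSW and martingale square functions directly; either works. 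Second, a cosmetic slip: the parameters $\alpha,\beta$ in the paper are simply Christ's constants $\gamma_1,\gamma_2$ for the two spaces of homogeneous type, hence depend only on the dilation structure (not on the Schwartz tails of $\varphi^{(j)}$), which is what the statement of Corollary~\ref{cortwisted} requires; they are not ``chosen sufficiently small.'' The verification of Condition~\eqref{eqjumpscondition} with $A=\varepsilon^{-2}$ from the bounded parent-to-child volume ratios, and the transference from a large product cube to all of $\mathbb{R}^d$ via the homogeneity $1/r=1/p+1/q$, are as you indicate.
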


Corollary \ref{cortwisted} can be extended to more general singular integral operators, to a larger range of $\mathrm{L}^p$ spaces,
or even to certain Sobolev spaces, as it was done for the geometrically ``flat'' particular case
in \cite{Ber}, \cite{BK}, \cite{Kov1}, \cite{Kov2}, and \cite{KT}.
We do not discuss any of these generalizations here, as the topic diverges from the martingale method once we have established
Estimate \eqref{eqtwistedest} and because we do not have any further ideas to present in these directions.

\subsection{An application to non-adapted stochastic integrals}
\label{subsecintegral}
The motivation behind the material in this subsection lies in presenting a possible direction in which It\={o}'s integration theory \cite{Ito}
can be extended beyond the limitations of the Bichteler-Dellacherie theorem.

This time we prefer to take continuous-time filtrations $(\mathcal{F}_t)_{t\geq 0}$ and $(\mathcal{G}_t)_{t\geq 0}$,
constructed analogously as in Subsection \ref{subsecintro}.
A natural example can be obtained by taking two mutually independent stochastic processes
$(A_t)_{t\geq 0}$ and $(B_t)_{t\geq 0}$ constructed on a product space and setting
\begin{equation}\label{eqfiltrations2}
\begin{array}{l}
\mathcal{F}_{t} := \sigma\big( \big\{A_s : 0\leq s\leq t \big\}\big) \vee \sigma\big(\big\{B_s : 0\leq s<\infty \big\} \big) , \\[1.5mm]
\mathcal{G}_{t} := \sigma\big( \big\{A_s : 0\leq s<\infty \big\}\big) \vee \sigma\big(\big\{B_s : 0\leq s\leq t \big\} \big) .
\end{array}
\end{equation}
Intuitively, $(\mathcal{F}_t)_{t\geq 0}$ is progressively following the first process but
does not filter any information relevant to the second one.
A sufficiently interesting case is already obtained when $(A_t,B_t)_{t\geq 0}$ is a two-dimensional Brownian motion.

Suppose that $(X_s)_{s\geq 0}$ is a real-valued martingale with respect to $(\mathcal{F}_s)_{s\geq 0}$
and that $(Y_s)_{s\geq 0}$ is a real-valued martingale with respect to $(\mathcal{G}_s)_{s\geq 0}$.
Let us also fix $t>0$ and additionally assume that $X_t,Y_t\in\mathrm{L}^4$.
Suppose that we would like to construct the stochastic integral
\begin{equation}\label{eqstochint}
\int_{0}^{t} H_s d(X_s Y_s) ,
\end{equation}
where $(H_s)_{s\geq 0}$ is a predictable process with respect to the filtration $(\mathcal{F}_s\cap\mathcal{G}_s)_{s\geq 0}$.
We emphasize that \eqref{eqstochint} is not the usual stochastic integral, so results of the classical integration theory cannot be applied.
Indeed, the integrator $(X_s Y_s)_{s\geq 0}$ need not be adapted to any reasonable filtration
and it also does not necessarily have paths of bounded variation.
We refer to Section~\ref{secclosing} for an illustrative example.
It might be somewhat unexpected that this process is still a ``good integrator''
in the sense of Corollaries~\ref{corbichdell} and \ref{corito} below.

Let us begin by taking an elementary $(\mathcal{F}_s\cap\mathcal{G}_s)_{s\geq 0}$-predictable integrand.
It is a process $H=(H_s)_{s\geq 0}$ given explicitly by
\begin{equation}\label{eqelementaryprocess}
H_s = \left\{\begin{array}{cl} K_{-1} & \textup{ for } s=0, \\
K_{k-1} & \textup{ for } t_{k-1}<s\leq t_{k},\ \ k=1,2,\ldots,n, \\
0 & \textup{ for } s>t, \end{array}\right.
\end{equation}
where
\begin{equation}\label{eqpartition}
0=t_0<t_1<t_2<\ldots<t_{n-1}<t_n=t
\end{equation}
is a partition of $[0,t]$, $K_{-1}$ is $(\mathcal{F}_{0}\cap\mathcal{G}_{0})$-measurable,
and each $K_k$ is $(\mathcal{F}_{t_{k}}\cap\mathcal{G}_{t_{k}})$-measurable.
In this particular case we define Integral \eqref{eqstochint} directly as
$$ \int_{0}^{t} H_s d(X_s Y_s) := \sum_{k=1}^{n} K_{k-1} (X_{t_{k}}Y_{t_{k}}-X_{t_{k-1}}Y_{t_{k-1}}) . $$
Observe that this definition is independent of the representation of $(H_s)_{s\geq 0}$.

An easy consequence of Theorem \ref{theoremdiscrete} (a) will be the following result.
\begin{corollary}\label{corbichdell}
Under the above conditions the set
$$ \bigg\{ \int_{0}^{t} H_s d(X_s Y_s) \,:\, \textup{\emph{$(H_s)_{s\geq 0}$ is elementary $(\mathcal{F}_s\cap\mathcal{G}_s)_{s\geq 0}$-predictable}},
\ \sup_{0\leq s\leq t}\|H_s\|_{\mathrm{L}^\infty}\leq 1 \bigg\} $$
is bounded in $\mathrm{L}^{4/3}$ and thus also in probability.
\end{corollary}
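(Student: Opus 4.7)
The plan is to recognize that an elementary stochastic integral of the form \eqref{eqstochint} is literally an instance of the discrete process $(K\cdot XY)_n$ studied in Theorem~\ref{theoremdiscrete}, and then apply inequality \eqref{eqdiscreteestimate} directly. Given an elementary $H$ with representation \eqref{eqelementaryprocess} based on the partition \eqref{eqpartition}, I would define the discrete filtrations $\mathcal{F}_k:=\mathcal{F}_{t_k}$ and $\mathcal{G}_k:=\mathcal{G}_{t_k}$, and the discrete processes $X_k:=X_{t_k}$, $Y_k:=Y_{t_k}$. Optional sampling (or direct verification) tells us that $(X_k)$ is a martingale with respect to $(\mathcal{F}_k)$, $(Y_k)$ is a martingale with respect to $(\mathcal{G}_k)$, and the process $(K_k)$ is adapted to $(\mathcal{F}_k\cap\mathcal{G}_k)=(\mathcal{F}_{t_k}\cap\mathcal{G}_{t_k})$. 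By the very definition of the elementary integral, one has
\[ \int_{0}^{t} H_s\, d(X_s Y_s) \;=\; (K\cdot XY)_n . \]

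Next I would apply \eqref{eqdiscreteestimate} to obtain
\[ \Big\|\int_{0}^{t} H_s\, d(X_s Y_s)\Big\|_{\mathrm{L}^{4/3}} \leq C\,\bigl(\|X_t\|_{\mathrm{L}^{4}}\|(K\cdot Y)_n\|_{\mathrm{L}^{2}} + \|Y_t\|_{\mathrm{L}^{4}}\|(K\cdot X)_n\|_{\mathrm{L}^{2}}\bigr), \]
noting $X_n=X_t$ and $Y_n=Y_t$. The key observation, which is what makes the argument go through cleanly, is that the two remaining factors $(K\cdot Y)_n$ and $(K\cdot X)_n$ are \emph{classical} martingale transforms: because $K_{k-1}$ is measurable with respect to both $\mathcal{G}_{t_{k-1}}$ and $\mathcal{F}_{t_{k-1}}$, the process $(K\cdot Y)_n$ is a transform of the $\mathcal{G}$-martingale $Y$, and $(K\cdot X)_n$ is a transform of the $\mathcal{F}$-martingale $X$. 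Orthogonality of martingale differences together with $|K_{k-1}|\leq\|H\|_{\mathrm{L}^\infty}\leq 1$ and telescoping yields
\[ \|(K\cdot Y)_n\|_{\mathrm{L}^{2}}^{2} = \sum_{k=1}^{n} \mathbb{E}\bigl[K_{k-1}^{2}(Y_{t_k}-Y_{t_{k-1}})^{2}\bigr] \leq \mathbb{E}[Y_t^{2}]-\mathbb{E}[Y_0^{2}] \leq \|Y_t\|_{\mathrm{L}^{2}}^{2}, \]
and similarly $\|(K\cdot X)_n\|_{\mathrm{L}^{2}}\leq \|X_t\|_{\mathrm{L}^{2}}$. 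Since $X_t,Y_t\in\mathrm{L}^{4}\subset\mathrm{L}^{2}$ on a probability space, combining these bounds gives
\[ \Big\|\int_{0}^{t} H_s\, d(X_s Y_s)\Big\|_{\mathrm{L}^{4/3}} \leq 2C\,\|X_t\|_{\mathrm{L}^{4}}\|Y_t\|_{\mathrm{L}^{4}}, \]
a quantity that is independent of $H$, of the partition, and of $n$.

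This establishes uniform boundedness of the set in $\mathrm{L}^{4/3}$; boundedness in probability then follows at once from Markov's inequality. I do not foresee a real obstacle: the only thing to watch is the bookkeeping identifying the elementary integral with $(K\cdot XY)_n$ and noticing that the companion terms on the right-hand side of \eqref{eqdiscreteestimate} happen to be genuine martingale transforms, to which the elementary $\mathrm{L}^2$ estimate applies even though no analogue is available for $(K\cdot XY)_n$ itself.
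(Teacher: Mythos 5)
Your proposal is correct and follows essentially the same route as the paper: sample the continuous-time objects at the partition points, identify the elementary integral with $(K\cdot XY)_n$, apply inequality \eqref{eqdiscreteestimate}, and then control the two $\mathrm{L}^2$ factors $\|(K\cdot X)_n\|_{\mathrm{L}^2}$ and $\|(K\cdot Y)_n\|_{\mathrm{L}^2}$ by orthogonality of martingale differences together with the bound $|K_{k-1}|\leq 1$. The only cosmetic difference is that the paper packages the $\mathrm{L}^2$ factors into the seminorm $\|H\|_{X,Y,t}$ and deduces Corollary~\ref{corbichdell} via the intermediate estimate \eqref{eqcontest}, while you bound them directly; the underlying estimates and the final $H$-independent bound are the same.
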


The famous Bichteler-Dellacherie theorem \cite{Bic1}, \cite{Bic2}, \cite{Del} characterizes semimartingales $(Z_s)_{s\geq 0}$
as c\`{a}dl\`{a}g adapted processes for which the set of integrals $\int_{0}^{t}H_s dZ_s$ with respect to
bounded elementary predictable integrands is bounded in probability.
Corollary~\ref{corbichdell} might be interesting because it shows that our pointwise-product process $(X_s Y_s)_{s\geq 0}$ also shares this property,
although it is not necessarily a semimartingale or even adapted with respect to $(\mathcal{F}_s\cap\mathcal{G}_s)_{s\geq 0}$.
Moreover, there is no canonical way of decomposing it into a finite variation part and a local martingale part;
see the discussion in Section~\ref{secclosing}.

For an elementary predictable process given by \eqref{eqelementaryprocess} we also define the seminorm
\begin{equation}\label{eqseminorm}
\|H\|_{X,Y,t} := \Big(\mathbb{E}\,\sum_{k=1}^{n}K_{k-1}^{2}(X_{t_k}\!-\!X_{t_{k-1}})^2
+\mathbb{E}\,\sum_{k=1}^{n}K_{k-1}^{2}(Y_{t_k}\!-\!Y_{t_{k-1}})^2\Big)^{1/2} .
\end{equation}
This expression does not depend on the representation of $H$, because if $K_{k-2}=K_{k-1}$, then we have the identity
$$ \mathbb{E}\, K_{k-2}^{2}(X_{t_{k}}\!-\!X_{t_{k-2}})^2
= \mathbb{E}\, K_{k-2}^{2}\big((X_{t_{k-1}}\!-\!X_{t_{k-2}})^2+(X_{t_{k}}\!-\!X_{t_{k-1}})^2\big) , $$
which in turn is a consequence of
$$ \mathbb{E}\, K_{k-2}^{2}(X_{t_{k-1}}\!-\!X_{t_{k-2}})(X_{t_{k}}\!-\!X_{t_{k-1}})
= \mathbb{E} \Big( K_{k-2}^{2}(X_{t_{k-1}}\!-\!X_{t_{k-2}}) \,\mathbb{E}(X_{t_{k}}\!-\!X_{t_{k-1}} | \mathcal{F}_{t_{k-1}}) \Big) = 0 . $$

The following result will also follow directly from Theorem \ref{theoremdiscrete}.
More precisely, Estimate \eqref{eqdiscreteestimate} will be a substitute for the It\={o} isometry from the classical construction.
\begin{corollary}\label{corito}
There exists an absolute constant $C$ such that for $(\mathcal{F}_s)_{s\geq 0}$, $(\mathcal{G}_s)_{s\geq 0}$, $(X_s)_{s\geq 0}$, $(Y_s)_{s\geq 0}$
as before, for each $t\geq 0$, and for any elementary $(\mathcal{F}_s\cap\mathcal{G}_s)_{s\geq 0}$-predictable process $(H_s)_{s\geq 0}$ one has
\begin{equation}\label{eqcontest}
\Big\| \int_{0}^{t} H_s d(X_s Y_s) \Big\|_{\mathrm{L}^{4/3}}
\leq C \,\|H\|_{X,Y,t} \big(\|X_t\|_{\mathrm{L}^{4}} + \|Y_t\|_{\mathrm{L}^{4}}\big) .
\end{equation}
Consequently, if
$$ \big(H_s^{(j)}\big)_{s\geq 0}, \quad j=1,2,\ldots $$
is a Cauchy sequence of elementary $(\mathcal{F}_s\cap\mathcal{G}_s)_{s\geq 0}$-predictable processes
in the seminorm $\|\cdot\|_{X,Y,t}$, then the sequence of random variables
\begin{equation}\label{eqstochint0}
\int_{0}^{t} H_s^{(j)} d(X_s Y_s), \quad j=1,2,\ldots
\end{equation}
converges in $\mathrm{L}^{4/3}$.
\end{corollary}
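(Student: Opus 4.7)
The plan is to deduce both parts of the corollary directly from estimate \eqref{eqdiscreteestimate} of Theorem~\ref{theoremdiscrete}(a), treating the elementary integrand as a discrete-time configuration and identifying the seminorm \eqref{eqseminorm} with the sum of squares of the $\mathrm{L}^2$ factors on the right-hand side of \eqref{eqdiscreteestimate}.

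First I would fix an elementary $(\mathcal{F}_s\cap\mathcal{G}_s)_{s\geq 0}$-predictable process $H$ with representation \eqref{eqelementaryprocess} and partition \eqref{eqpartition}, and sample the continuous-time data along this partition: set $\widetilde{X}_k := X_{t_k}$, $\widetilde{Y}_k := Y_{t_k}$, $\widetilde{K}_k := K_k$, and take the discrete filtrations $\widetilde{\mathcal{F}}_k := \mathcal{F}_{t_k}$, $\widetilde{\mathcal{G}}_k := \mathcal{G}_{t_k}$. Then $(\widetilde{X}_k)$ is an $(\widetilde{\mathcal{F}}_k)$-martingale, $(\widetilde{Y}_k)$ is a $(\widetilde{\mathcal{G}}_k)$-martingale, $(\widetilde{K}_k)$ is adapted to $(\widetilde{\mathcal{F}}_k\cap\widetilde{\mathcal{G}}_k)$, and by the definition of the integral one has $\int_0^t H_s\,d(X_s Y_s) = (\widetilde{K}\cdot\widetilde{X}\widetilde{Y})_n$. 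Applying \eqref{eqdiscreteestimate} yields
\[
\Big\|\int_0^t H_s\,d(X_s Y_s)\Big\|_{\mathrm{L}^{4/3}} \leq C\,\big(\|X_t\|_{\mathrm{L}^4}\|(\widetilde{K}\cdot\widetilde{Y})_n\|_{\mathrm{L}^2} + \|Y_t\|_{\mathrm{L}^4}\|(\widetilde{K}\cdot\widetilde{X})_n\|_{\mathrm{L}^2}\big).
\]

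Next I would identify the two $\mathrm{L}^2$ factors with $\|H\|_{X,Y,t}$. Since $K_{k-1}$ is $\mathcal{G}_{t_{k-1}}$-measurable and $(Y_s)$ is a $(\mathcal{G}_s)$-martingale, the conditional expectation argument already used after \eqref{eqseminorm} shows that the martingale differences $K_{k-1}(Y_{t_k}-Y_{t_{k-1}})$ are pairwise orthogonal in $\mathrm{L}^2$, so
\[
\|(\widetilde{K}\cdot\widetilde{Y})_n\|_{\mathrm{L}^2}^2 = \sum_{k=1}^n \mathbb{E}\,K_{k-1}^{2}(Y_{t_k}-Y_{t_{k-1}})^2,
\]
and analogously with $X$ in place of $Y$, using that $(X_s)$ is an $(\mathcal{F}_s)$-martingale and $K_{k-1}$ is $\mathcal{F}_{t_{k-1}}$-measurable. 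Adding the two identities gives $\|(\widetilde{K}\cdot\widetilde{X})_n\|_{\mathrm{L}^2}^2+\|(\widetilde{K}\cdot\widetilde{Y})_n\|_{\mathrm{L}^2}^2 = \|H\|_{X,Y,t}^2$, so each of the two factors is at most $\|H\|_{X,Y,t}$, and \eqref{eqcontest} follows.

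For the second assertion I would use linearity. Given a Cauchy sequence $(H^{(j)})_j$ in $\|\cdot\|_{X,Y,t}$, for any $i,j$ one may refine the partitions of $H^{(i)}$ and $H^{(j)}$ to a common partition, whence $H^{(i)}-H^{(j)}$ is again elementary $(\mathcal{F}_s\cap\mathcal{G}_s)$-predictable and the integral is additive; applying \eqref{eqcontest} to the difference gives
\[
\Big\|\int_0^t H_s^{(i)}\,d(X_s Y_s) - \int_0^t H_s^{(j)}\,d(X_s Y_s)\Big\|_{\mathrm{L}^{4/3}} \leq C\,\|H^{(i)}-H^{(j)}\|_{X,Y,t}\,\big(\|X_t\|_{\mathrm{L}^4}+\|Y_t\|_{\mathrm{L}^4}\big),
\]
so the sequence \eqref{eqstochint0} is Cauchy in $\mathrm{L}^{4/3}$ and thus convergent by completeness.

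There is no serious obstacle here: the only point requiring a moment's care is the orthogonality computation that reduces the two discrete $\mathrm{L}^2$ norms to $\|H\|_{X,Y,t}$, and the standard bookkeeping of passing two elementary integrands to a common refinement in order to use linearity.
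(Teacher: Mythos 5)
Your proposal is correct and follows essentially the same route as the paper: sample the continuous-time data along the partition to reduce to the discrete setting, apply estimate \eqref{eqdiscreteestimate}, identify the two discrete $\mathrm{L}^2$-norms with $\|H\|_{X,Y,t}$ via the discrete It\={o} isometry (orthogonality of martingale increments), and derive the Cauchy property from linearity after passing to a common refinement. The paper compresses the orthogonality and refinement steps into a single sentence, but the argument is the same.
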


The set of all elementary $(\mathcal{F}_s\cap\mathcal{G}_s)_{s\geq 0}$-predictable processes
\eqref{eqelementaryprocess} with finite seminorm \eqref{eqseminorm} is a linear space
and we can identify all $H$ and $\widetilde{H}$ such that $\|H-\widetilde{H}\|_{X,Y,t}=0$.
Then we let $\mathcal{P}_{X,Y,t}$ denote the completion of the obtained normed space with respect to $\|\cdot\|_{X,Y,t}$.
Corollary~\ref{corito} extends the definition of \eqref{eqstochint} by continuity to processes in $\mathcal{P}_{X,Y,t}$.
Integrals \eqref{eqstochint0} can be thought of as Riemann sums of \eqref{eqstochint}.
Therefore, the stochastic integral is defined in the strong sense,
as a limit in $\mathrm{L}^{4/3}$ and thus also in probability.

We have intentionally avoided any mention of quadratic variations of the two martingales in question
and so did not need to impose any conditions on the filtrations or martingale paths that would guarantee their existence.
However, if the two filtrations satisfy the ``usual hypotheses'' from \cite[I.1]{Pro}
and quadratic variations $(\langle X\rangle_s)_{s\geq 0}$ and $(\langle Y\rangle_s)_{s\geq 0}$ are available,
then Corollary~\ref{corito} actually extends integral \eqref{eqstochint} to predictable integrands $(H_s)_{s\geq 0}$ satisfying
$$ \mathbb{E}\int_{0}^{t}H_{s}^{2}\,d\big(\langle X\rangle_s\!+\!\langle Y\rangle_s\big)<\infty . $$

A rather naive interpretation of Quantity \eqref{eqstochint} is as follows.
A unit stock price is formed as a product of two martingales $(X_s)_{s\geq 0}$ and $(Y_s)_{s\geq 0}$
that ``progress'' with two independent processes $(A_s)_{s\geq 0}$ and $(B_s)_{s\geq 0}$.
However, $X_t$ depends on the amount of information from the first process available up to time $t$,
but we do not know how it depends on the second process.
This accounts to $X_t$ being measurable with respect to the $\sigma$-algebra $\mathcal{F}_t$ defined by \eqref{eqfiltrations2}.
We also impose an analogous requirement on $(Y_s)_{s\geq 0}$.
The random variable \eqref{eqstochint} equals the accumulated gain up to time $t$ when using a trading strategy $(H_s)_{s\geq 0}$.

Let us close this section with a comment that we did not insist on finding minimal conditions required
for the construction of \eqref{eqstochint}.
For instance, the two martingales could be local in the sense that a sequence of stopping times
with respect to $(\mathcal{F}_s\cap\mathcal{G}_s)_{s\geq 0}$
reduces them to $\mathrm{L}^4$ martingales, in analogy with \cite[I.6]{Pro}.
On the other hand, the partitions of $[0,t]$ could have also been random,
consisting of stopping times with respect to $(\mathcal{F}_s\cap\mathcal{G}_s)_{s\geq 0}$, as in \cite[II.5]{Pro}.
Keeping the exposition simple seems to be a useful tradeoff that
emphasizes the main novelty in discrete-time estimates of Subsection~\ref{subsecintro}.

\section{Construction of the control process}
\label{seccontrol}

Let us begin by performing a few easy reductions in Theorem \ref{theoremdiscrete}.
Inequality \eqref{eqdiscreteestimate} is an easy consequence of \eqref{eqdiscreteestimate1}.
One only has to perform the splitting
\begin{equation}
\label{eqsplitting}
(K\cdot XY)_n = (KX\cdot Y)_n + (KY\cdot X)_n + \sum_{k=1}^{n} K_{k-1}(X_{k}-X_{k-1})(Y_{k}-Y_{k-1}) ,
\end{equation}
which can be thought of as a discrete version of the integration by parts formula.
The first two terms on the right hand side of \eqref{eqsplitting} are analogous by symmetry.
The third term is handled using the Cauchy-Schwarz, H\"{o}lder, and Burkholder-Davis-Gundy inequalities \cite{BDG},
{\allowdisplaybreaks\begin{align*}
& \Big\|\sum_{k=1}^{n}K_{k-1}(X_{k}\!-\!X_{k-1})(Y_{k}\!-\!Y_{k-1})\Big\|_{\mathrm{L}^{4/3}} \\
& \leq \Big\|\Big(\sum_{k=1}^{n}K_{k-1}^2(X_{k}\!-\!X_{k-1})^2\Big)^{1/2}
\Big(\sum_{k=1}^{n}(Y_{k}\!-\!Y_{k-1})^2\Big)^{1/2}\Big\|_{\mathrm{L}^{4/3}} \\
& \leq \Big\|\Big(\sum_{k=1}^{n}K_{k-1}^2(X_{k}\!-\!X_{k-1})^2\Big)^{1/2}\Big\|_{\mathrm{L}^{2}}
\Big\|\Big(\sum_{k=1}^{n}(Y_{k}\!-\!Y_{k-1})^2\Big)^{1/2}\Big\|_{\mathrm{L}^{4}} \\
& \leq \widetilde{C}\|(K\cdot X)_n\|_{\mathrm{L}^{2}} \|Y_n\|_{\mathrm{L}^{4}} .
\end{align*}}
In the last line we also used the discrete-time It\={o} isometry,
$$ \|(K\cdot X)_n\|_{\mathrm{L}^{2}}^{2} = \mathbb{E}\sum_{k=1}^{n}K_{k-1}^2(X_{k}\!-\!X_{k-1})^2 . $$

Moreover, we claim that we do not lose generality if we assume that $K_k=1$ for each $k$ in the proof of Estimates
\eqref{eqdiscreteestimate1} and \eqref{eqdiscreteestimate3}.
Indeed, we have
$$ KX\cdot Y = X\cdot (K\cdot Y) , $$
so \eqref{eqdiscreteestimate1} is just a statement about martingales $(X_k)_{k=0}^{\infty}$ and $\big((K\cdot Y)_{k}\big)_{k=0}^{\infty}$.
The same trick also applies to Inequality \eqref{eqdiscreteestimate3}, because ordinary Burkoldher's martingale transform is known to be bounded:
$$ \|(K\cdot Y)_n\|_{\mathrm{L}^{q}} \leq C_q
\,\big(\max_{0\leq k\leq n-1}\|K_k\|_{\mathrm{L}^\infty}\big)\, \|Y_n\|_{\mathrm{L}^{q}} $$
for come constant $C_q$, whenever $1<q<\infty$.

Observe that the variables $X_k$ and $Y_k$ for $k>n$ do not appear in any of the formulae,
so if we denote $X:=X_n$, $Y:=Y_n$, we immediately reduce to the situation when
$$ \mathbb{E}(X|\mathcal{F}_k)=X_k \,\textup{ and }\, \mathbb{E}(Y|\mathcal{G}_k)=Y_k \,\textup{ for each } k . $$
At this point we start using letters $X,Y$ to denote both random variables and the martingales $(X_k)_{k=0}^{\infty},(Y_k)_{k=0}^{\infty}$.
This will not cause confusion, because the filtrations are fixed.
The above reductions are understood throughout both the current section and the following one.

Dualizing \eqref{eqdiscreteestimate1} we see that we need to show
\begin{equation}\label{eqdiscestdualized}
\big|\mathbb{E}\big((X\cdot Y)_n \,Z\big)\big| \leq C\,\|X\|_{\mathrm{L}^{4}}\|Y\|_{\mathrm{L}^{2}}\|Z\|_{\mathrm{L}^{4}} ,
\end{equation}
for $Z\in\mathrm{L}^{4}$, while \eqref{eqdiscreteestimate3} is equivalent to
\begin{equation}\label{eqdiscestdualized2}
\big|\mathbb{E}\big((X\cdot Y)_n \,Z\big)\big| \leq C_{p,q,r} A^{3/2} \|X\|_{\mathrm{L}^{p}}\|Y\|_{\mathrm{L}^{q}}\|Z\|_{\mathrm{L}^{r'}}
\end{equation}
for an arbitrary random variable $Z\in\mathrm{L}^{r'}$,
where $r'$ is the conjugated exponent of $r$, i.e.\@ $r'=r/(r-1)$.

\smallskip
Recall that
$$ \mathbb{E}\big((X\cdot Y)_n \,Z\big) = \sum_{k=0}^{n-1} \mathbb{E}\big((Y_{k+1}\!-\!Y_{k})X_{k}Z\big) . $$
By writing
$$ X_{k} Z = \big(X_{k}Z - \mathbb{E}(X_{k}Z|\mathcal{G}_{k+1})\big)
+ \big(\mathbb{E}(X_{k}Z|\mathcal{G}_{k+1}) - \mathbb{E}(X_{k}Z|\mathcal{G}_{k})\big) + \mathbb{E}(X_{k}Z|\mathcal{G}_{k}) $$
and using the martingale property of $(Y_k)_{k=0}^{\infty}$ we obtain
\begin{align*}
\mathbb{E}\big((Y_{k+1}\!-\!Y_{k}) X_{k} Z\big) &
= \mathbb{E}\bigg(\underbrace{\mathbb{E}\Big( (Y_{k+1}\!-\!Y_{k})
\big(X_{k}Z - \mathbb{E}(X_{k}Z|\mathcal{G}_{k+1})\big) \Big| \mathcal{G}_{k+1} \Big)}_{=0} \bigg) \\
& \quad + \mathbb{E}\Big((Y_{k+1}\!-\!Y_{k})
\big(\mathbb{E}(X_{k}Z|\mathcal{G}_{k+1}) - \mathbb{E}(X_{k}Z|\mathcal{G}_{k})\big) \Big) \\
& \quad + \mathbb{E}\bigg(\underbrace{\mathbb{E}\Big( (Y_{k+1}\!-\!Y_{k})
\mathbb{E}(X_{k}Z|\mathcal{G}_{k}) \Big| \mathcal{G}_{k} \Big)}_{=0} \bigg) .
\end{align*}
We insert the conditional expectation with respect to $\mathcal{F}_{k}\cap\mathcal{G}_{k}$ in the remaining term,
so that altogether we get
\begin{equation}\label{eqsplitintoalphas}
\mathbb{E}\big((X\cdot Y)_n \,Z\big) = \mathbb{E}\sum_{k=0}^{n-1} \alpha_{k}(X,Y,Z) ,
\end{equation}
where we have denoted
$$ \alpha_{k}(X,Y,Z) := \mathbb{E}\Big((Y_{k+1}\!-\!Y_{k})
\big(\mathbb{E}(X_{k}Z|\mathcal{G}_{k+1}) - \mathbb{E}(X_{k}Z|\mathcal{G}_{k})\big) \Big| \mathcal{F}_{k}\cap\mathcal{G}_{k}\Big) . $$
Thus, we choose to work with a process that is ``artificially'' adapted to $(\mathcal{F}_{k}\cap\mathcal{G}_{k})_{k=0}^{\infty}$,
even though it does not possess any typical martingale properties.

Our approach to bounding $\sum_{k=0}^{n-1}|\alpha_{k}|$ is to find an appropriate control process
in the sense of Proposition~\ref{propbellman} below.
This technique is sometimes called the method of Bellman functions,
but the name and the idea actually come from optimal control theory.
An interested reader can consult survey articles \cite{NT} and \cite{NTV}.
Our modification does not really introduce any control parameters and we only keep the idea
of constructing an auxiliary process with required ``convexity'' properties.

\begin{proposition}\label{propbellman}
There exists a process $(\beta_{k}(X,Y,Z))_{k=0}^{\infty}$
that is adapted to the filtration\linebreak \hbox{$(\mathcal{F}_k\cap\mathcal{G}_k)_{k=0}^{\infty}$} and satisfies
\begin{align}
& |\alpha_{k}(X,Y,Z)| \,\leq\, \mathbb{E}\big(\beta_{k+1}(X,Y,Z) \big|\mathcal{F}_{k}\cap\mathcal{G}_{k}\big) - \beta_{k}(X,Y,Z)
\label{eqdiscauxalpha} , \\
& 0 \,\leq\, \beta_{k}(X,Y,Z) \,\leq\, \frac{1}{2}\,\mathbb{E}(X^2|\mathcal{F}_{k}\cap\mathcal{G}_{k})^2
+ \frac{1}{2}\,\mathbb{E}(Y^2|\mathcal{F}_{k}\cap\mathcal{G}_{k}) + \frac{1}{2}\,\mathbb{E}(Z^2|\mathcal{F}_{k}\cap\mathcal{G}_{k})^2
\label{eqdiscauxbeta}
\end{align}
for each nonnegative integer $k$.
\end{proposition}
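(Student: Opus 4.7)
The plan is to decompose $|\alpha_k|$ via Cauchy--Schwarz into a $Y$-variation part and an $X$-$Z$ interaction part, and to build $\beta_k$ out of two corresponding sub-processes. First, I would reduce $\alpha_k$ to the cleaner expression
$$\alpha_k = \mathbb{E}\big((Y_{k+1}-Y_k)X_k Z\,\big|\,\mathcal{H}_k\big),\quad\mathcal{H}_k:=\mathcal{F}_k\cap\mathcal{G}_k,$$
by writing $X_kZ$ as the telescoping sum $(X_kZ-\mathbb{E}(X_kZ|\mathcal{G}_{k+1}))+(\mathbb{E}(X_kZ|\mathcal{G}_{k+1})-\mathbb{E}(X_kZ|\mathcal{G}_k))+\mathbb{E}(X_kZ|\mathcal{G}_k)$ and verifying that, inside $\mathbb{E}((Y_{k+1}-Y_k)\,\cdot\,|\mathcal{H}_k)$, the first summand vanishes by the tower property (since $\mathcal{H}_k\subset\mathcal{G}_k\subset\mathcal{G}_{k+1}$) and the third vanishes by the $\mathcal{G}$-martingale property of $(Y_k)$. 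Applying the conditional Cauchy--Schwarz inequality and $uv\le\tfrac{1}{2}(u^2+v^2)$ then gives
$$|\alpha_k|\le\tfrac{1}{2}\mathbb{E}\big((Y_{k+1}-Y_k)^2\,\big|\,\mathcal{H}_k\big)+\tfrac{1}{2}\mathbb{E}(X_k^2Z^2|\mathcal{H}_k).$$

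The first summand is easily realized as an $\mathcal{H}$-increment: for $P_k:=\mathbb{E}(Y_k^2|\mathcal{H}_k)$, the containment $\mathcal{H}_k\subset\mathcal{G}_k$ together with $\mathbb{E}(Y_{k+1}-Y_k|\mathcal{G}_k)=0$ yields
$$\mathbb{E}(P_{k+1}|\mathcal{H}_k)-P_k=\mathbb{E}((Y_{k+1}-Y_k)^2|\mathcal{H}_k),$$
while $0\le P_k\le\mathbb{E}(Y^2|\mathcal{H}_k)$ by Jensen's inequality. Thus $\tfrac{1}{2}P_k$ is the $Y$-piece of $\beta_k$. The core task is then to construct a companion process $Q_k$, also adapted to $(\mathcal{H}_k)$, satisfying $\mathbb{E}(Q_{k+1}|\mathcal{H}_k)-Q_k\ge\tfrac{1}{2}\mathbb{E}(X_k^2Z^2|\mathcal{H}_k)$ and $0\le Q_k\le\tfrac{1}{2}\mathbb{E}(X^2|\mathcal{H}_k)^2+\tfrac{1}{2}\mathbb{E}(Z^2|\mathcal{H}_k)^2$; then $\beta_k:=\tfrac{1}{2}P_k+Q_k$ will satisfy both \eqref{eqdiscauxalpha} and \eqref{eqdiscauxbeta}. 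To obtain this $Q_k$ I would exploit the conditional independence of $\mathcal{F}_k$ and $\mathcal{G}_k$ given $\mathcal{H}_k$ supplied by Proposition~\ref{propcommuting}, namely $\mathbb{E}(\xi\zeta|\mathcal{H}_k)=\mathbb{E}(\xi|\mathcal{H}_k)\mathbb{E}(\zeta|\mathcal{H}_k)$ for any $\xi\in\mathrm{L}^2(\mathcal{F}_k)$ and $\zeta\in\mathrm{L}^2(\mathcal{G}_k)$, and use it to re-express $\mathbb{E}(X_k^2Z^2|\mathcal{H}_k)$ in terms of the full product $\mathbb{E}(X^2|\mathcal{H}_k)\mathbb{E}(Z^2|\mathcal{H}_k)$ and its progressive analog $\mathbb{E}(X_k^2|\mathcal{H}_k)\mathbb{E}(Z_k^2|\mathcal{H}_k)$, where $Z_k:=\mathbb{E}(Z|\mathcal{G}_k)$.

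The hard part will be pinning down the precise algebraic form of $Q_k$. The right-hand side $\mathbb{E}(X_k^2Z^2|\mathcal{H}_k)$ is an instantaneous quantity that does not telescope, while the required upper bound has the shape of conditional fourth moments: the two constraints are in tension. Naive candidates fail: the $\mathcal{H}$-martingale $\mathbb{E}(X^2|\mathcal{H}_k)\mathbb{E}(Z^2|\mathcal{H}_k)$ has zero $\mathcal{H}$-increment; the accumulator $\mathbb{E}(\sum_{j<k}X_j^2Z^2|\mathcal{H}_k)$ grows unboundedly in $k$; the progressive product $\mathbb{E}(X_k^2|\mathcal{H}_k)\mathbb{E}(Z_k^2|\mathcal{H}_k)$ has increments contaminated by a sign-indefinite cross-covariation of the two submartingales. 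I expect the resolution to come from a judicious combination of these that leverages the two strictly positive drift contributions $\mathbb{E}(X_k^2|\mathcal{H}_k)\mathbb{E}((Z_{k+1}-Z_k)^2|\mathcal{H}_k)$ and $\mathbb{E}(Z_k^2|\mathcal{H}_k)\mathbb{E}((X_{k+1}-X_k)^2|\mathcal{H}_k)$ supplied by the submartingale structure of the progressive conditional second moments, together with the coefficient $\tfrac{1}{2}$ of slack in the upper bound, so as to simultaneously dominate $\tfrac{1}{2}\mathbb{E}(X_k^2Z^2|\mathcal{H}_k)$ and absorb any sign-indefinite residuals.
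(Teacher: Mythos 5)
Your reduction to $\alpha_k=\mathbb{E}\big((Y_{k+1}-Y_k)X_kZ\,\big|\,\mathcal{H}_k\big)$ is a correct identity and agrees in spirit with the paper's starting point. The gap is in the very next step: applying $|uv|\le\tfrac12(u^2+v^2)$ with $u=Y_{k+1}-Y_k$ and $v=X_kZ$ discards essential structure. The quantity $\alpha_k$ is really the $\mathcal{H}_k$-conditional covariance of \emph{two} $\mathcal{G}$-martingale increments, $Y_{k+1}-Y_k$ and $\mathbb{E}(X_kZ|\mathcal{G}_{k+1})-\mathbb{E}(X_kZ|\mathcal{G}_k)$; writing it as $\mathbb{E}((Y_{k+1}-Y_k)X_kZ|\mathcal{H}_k)$ is valid only because the non-difference parts of $X_kZ$ have vanishing $\mathcal{H}_k$-conditional covariance with $Y_{k+1}-Y_k$. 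Once you square, that cancellation is lost and you are left with the ``full-size'' term $\tfrac12\mathbb{E}(X_k^2Z^2|\mathcal{H}_k)$, which is not a difference quantity and cannot be realized as the drift of a bounded process. Concretely, no $Q_k$ with $0\le Q_k\le\tfrac12\mathbb{E}(X^2|\mathcal{H}_k)^2+\tfrac12\mathbb{E}(Z^2|\mathcal{H}_k)^2$ and $\mathbb{E}(Q_{k+1}|\mathcal{H}_k)-Q_k\ge\tfrac12\mathbb{E}(X_k^2Z^2|\mathcal{H}_k)$ can exist: telescoping forces $\tfrac12\sum_{k=0}^{n-1}\mathbb{E}(X_k^2Z^2)\le\mathbb{E} Q_n\le\tfrac12\|X\|_{\mathrm{L}^4}^4+\tfrac12\|Z\|_{\mathrm{L}^4}^4$, and the left side is already unbounded in $n$ when $X$ is $\mathcal{F}_0$-measurable (so $X_k=X$ for all $k\le n$) and $Z\equiv 1$, since it grows like $\tfrac{n}{2}\|X\|_{\mathrm{L}^2}^2$. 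So the ``judicious combination'' you hope for does not exist; the obstruction is not a missing algebraic trick but that the inequality you start from is already too weak to admit a control process.

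The fix is to apply $|uv|\le\tfrac12(u^2+v^2)$ one step earlier, before discarding the $\mathcal{G}$-difference on the $X_kZ$ factor. The paper keeps both factors in increment form by rewriting $\alpha_k$ in fiberwise notation as
\[
\alpha_k=\mathbb{E}_{\mathcal{A}_k}^{\omega_1'\to\omega_1}\mathbb{E}_{\mathcal{A}_k}^{\omega_1''\to\omega_1}\mathbb{E}_{\mathcal{B}_k}^{\omega_2'\to\omega_2}\big(\Delta_{\mathcal{B}_k}^{\omega_2''\to\omega_2'}Y(\omega_1',\omega_2'')\big)\big(\Delta_{\mathcal{B}_k}^{\omega_2''\to\omega_2'}X(\omega_1'',\omega_2'')Z(\omega_1',\omega_2'')\big),
\]
and only then applies the elementary inequality. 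The resulting squared $\mathcal{B}$-difference $\varepsilon_k(X,Z)$ can, via repeated use of Lemma~\ref{lemmaformulae}(c) in both coordinates, be matched to the drift $\delta_k(X,Z)=\mathbb{E}(\gamma_{k+1}(X,Z)-\gamma_k(X,Z)|\mathcal{H}_k)$ of the explicit fourth-moment-type process $\gamma_k(V,W):=\mathbb{E}_{\mathcal{A}_k}^{\omega_1'\to\omega_1}\mathbb{E}_{\mathcal{A}_k}^{\omega_1''\to\omega_1}\big(\mathbb{E}_{\mathcal{B}_k}^{\omega_2'\to\omega_2}V(\omega_1',\omega_2')W(\omega_1'',\omega_2')\big)^2$, up to a sign-indefinite remainder $\zeta_k(X,Z)$ that is absorbed, by one more application of $|uv|\le\tfrac12(u^2+v^2)$, into drifts of $\gamma_k(X,X)$ and $\gamma_k(Z,Z)$. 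This yields $\beta_k=\tfrac12\mathbb{E}(Y_k^2|\mathcal{F}_k)+\tfrac12\gamma_k(X,Z)+\tfrac14\gamma_k(X,X)+\tfrac14\gamma_k(Z,Z)$, which then satisfies both \eqref{eqdiscauxalpha} and \eqref{eqdiscauxbeta}. Your conditional-independence intuition is the right lens for understanding why the two-copy quantity $\gamma_k$ is natural here, but it has to be deployed before, not after, the martingale differences are squared away.
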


We need to introduce a bit nonstandard notation in order to be able to write down the desired process.
Operators $\mathbb{E}_{\mathcal{A}_{k}}^{\omega'\!\to\omega}$ and $\Delta_{\mathcal{A}_{k}}^{\omega'\!\to\omega}$
acting on a random variable $U$ are defined as
$$ \mathbb{E}_{\mathcal{A}_{k}}^{\omega'\!\to\omega} U(\omega') := \mathbb{E}(U|\mathcal{A}_{k})(\omega),
\qquad\Delta_{\mathcal{A}_{k}}^{\omega'\!\to\omega} U(\omega') :=
\mathbb{E}_{\mathcal{A}_{k+1}}^{\omega'\!\to\omega} U(\omega') - \mathbb{E}_{\mathcal{A}_{k}}^{\omega'\!\to\omega} U(\omega') $$
for $U\in\mathrm{L}^{1}(\Omega_1,\mathcal{A},\mathbb{P}_1)$, $\omega\in\Omega_1$, and a nonnegative integer $k$.
Note that $\mathbb{E}(U|\mathcal{A}_{k})$ is the conditional expectation with respect to a sub-$\sigma$-algebra $\mathcal{A}_k$
of the probability space $(\Omega_1,\mathcal{A},\mathbb{P}_1)$.
We will be dealing with expressions like $V(\omega',\omega'',\ldots)$ and we will need to apply these operators fiberwise,
for instance in the ``variable'' $\omega'$ only.
For that reason we emphasize notationally in $\mathbb{E}_{\mathcal{A}_{k}}^{\omega'\!\to\omega}$ and $\Delta_{\mathcal{A}_{k}}^{\omega'\!\to\omega}$
that conditional expectations are taken in $\omega'$ and the results are evaluated in $\omega$.
This will prevent confusion in the later computations.
We define $\mathbb{E}_{\mathcal{B}_{k}}^{\omega'\!\to\omega}$ and $\Delta_{\mathcal{B}_{k}}^{\omega'\!\to\omega}$ analogously.
Several formulae for manipulation with these operators are given in the following lemma.

\begin{lemma}\label{lemmaformulae}
\begin{itemize}
\item[(a)]
For any $k\geq 0$, \,$V\in\mathrm{L}^{1}(\Omega_1\!\times\!\Omega_1,\mathcal{A}\!\otimes\!\mathcal{A},\mathbb{P}_1\!\times\!\mathbb{P}_1)$,
and $\omega\in\Omega_1$ we have
\begin{equation}\label{eqformulaa}
\mathbb{E}_{\mathcal{A}_{k}}^{\omega'\!\to\omega} \mathbb{E}_{\mathcal{A}_{k}}^{\omega''\!\to\omega'} V(\omega',\omega'')
= \mathbb{E}_{\mathcal{A}_{k}}^{\omega'\!\to\omega} \mathbb{E}_{\mathcal{A}_{k}}^{\omega''\!\to\omega} V(\omega',\omega'') .
\end{equation}
\item[(b)]
For any $k,\ell\geq 0$, \,$W\in\mathrm{L}^{1}(\Omega_1\!\times\!\Omega_2,\mathcal{A}\!\otimes\!\mathcal{B},\mathbb{P}_1\!\times\!\mathbb{P}_2)$,
\,$\omega_1\in\Omega_1$, and $\omega_2\in\Omega_2$ we have
\begin{align}
& \mathbb{E}_{\mathcal{A}_{k}}^{\omega'_1\!\to\omega_1}W(\omega'_1,\omega_2) = \mathbb{E}(W|\mathcal{F}_{k})(\omega_1,\omega_2),
\quad \mathbb{E}_{\mathcal{B}_{\ell}}^{\omega'_2\!\to\omega_2}W(\omega_1,\omega'_2) = \mathbb{E}(W|\mathcal{G}_{\ell})(\omega_1,\omega_2),
\label{eqformulab1} \\
& \mathbb{E}_{\mathcal{A}_{k}}^{\omega'_1\!\to\omega_1} \mathbb{E}_{\mathcal{B}_{\ell}}^{\omega'_2\!\to\omega_2} W(\omega'_1,\omega'_2)
= \mathbb{E}_{\mathcal{B}_{\ell}}^{\omega'_2\!\to\omega_2} \mathbb{E}_{\mathcal{A}_{k}}^{\omega'_1\!\to\omega_1} W(\omega'_1,\omega'_2)
= \mathbb{E}(W|\mathcal{F}_{k}\cap\mathcal{G}_{\ell})(\omega_1,\omega_2) . \label{eqformulab2}
\end{align}
\item[(c)]
For any $k\geq 0$, \,$U_1,U_2\in\mathrm{L}^{1}(\Omega_1,\mathcal{A},\mathbb{P}_1)$, and $\omega\in\Omega_1$ we have
\begin{align}
& \mathbb{E}_{\mathcal{A}_{k}}^{\omega'\!\to\omega}
\big(\Delta_{\mathcal{A}_{k}}^{\omega''\!\to\omega'}U_1(\omega'')\big)
\big(\Delta_{\mathcal{A}_{k}}^{\omega''\!\to\omega'}U_2(\omega'')\big) \nonumber \\
& = \mathbb{E}_{\mathcal{A}_{k}}^{\omega'\!\to\omega}
\Big(\big(\mathbb{E}_{\mathcal{A}_{k+1}}^{\omega''\!\to\omega'}U_1(\omega'')\big)
\big(\mathbb{E}_{\mathcal{A}_{k+1}}^{\omega''\!\to\omega'}U_2(\omega'')\big)
- \big(\mathbb{E}_{\mathcal{A}_{k}}^{\omega''\!\to\omega'}U_1(\omega'')\big)
\big(\mathbb{E}_{\mathcal{A}_{k}}^{\omega''\!\to\omega'}U_2(\omega'')\big)\Big) . \label{eqformulac}
\end{align}
\end{itemize}
All identities are understood to hold a.s.\@ with respect to the corresponding probability measure.
Parts \emph{(a)} and \emph{(c)} also hold when $(\mathcal{A}_k)_{k=0}^{\infty}$ is replaced with $(\mathcal{B}_k)_{k=0}^{\infty}$
and $(\Omega_1,\mathcal{A},\mathbb{P}_1)$ is replaced with $(\Omega_2,\mathcal{B},\mathbb{P}_2)$.
\end{lemma}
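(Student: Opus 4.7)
The plan is to verify each identity by reducing to tensor products (for (a) and (b)) or to an algebraic expansion (for (c)), exploiting throughout the product structure of $(\Omega_1\times\Omega_2,\mathcal{A}\otimes\mathcal{B},\mathbb{P}_1\times\mathbb{P}_2)$. The guiding principle is that the fiberwise operators $\mathbb{E}_{\mathcal{A}_k}^{\omega'\to\omega}$ just compute the usual conditional expectation in the indicated variable while treating the other one as a parameter, so that the identities become routine once one verifies them on elementary tensors and appeals to $L^1$-density and the contractivity of conditional expectation on $L^1$.

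For part (a), I would first check the identity on $V(\omega',\omega'') = V_1(\omega')V_2(\omega'')$. Setting $g := \mathbb{E}(V_2\,|\,\mathcal{A}_k)$, the inner operator produces $V_1(\omega')\,g(\omega')$ on the left and $V_1(\omega')\,g(\omega)$ on the right. Applying $\mathbb{E}_{\mathcal{A}_k}^{\omega'\to\omega}$ then yields $g(\omega)\,\mathbb{E}(V_1|\mathcal{A}_k)(\omega)$ in both cases: on the left because the $\mathcal{A}_k$-measurable factor $g$ pulls out, on the right because $g(\omega)$ is constant in $\omega'$. Linear combinations of tensor products are dense in $L^1(\Omega_1\times\Omega_1)$, and both sides of \eqref{eqformulaa} are $L^1$-continuous in $V$, so the identity extends to arbitrary $V$.

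For part (b), I would verify \eqref{eqformulab1} on tensor products $W = W_1\otimes W_2$: the conditional expectation on the product space with respect to $\mathcal{F}_k = \mathcal{A}_k\otimes\mathcal{B}$ of such a $W$ equals $\mathbb{E}(W_1|\mathcal{A}_k)\otimes W_2$, matching the fiberwise operator, and then extend by linearity and $L^1$-density. The iterated identity \eqref{eqformulab2} follows by applying \eqref{eqformulab1} twice to rewrite the composition as $\mathbb{E}\bigl(\mathbb{E}(W|\mathcal{F}_k)\,\big|\,\mathcal{G}_\ell\bigr)$; commutativity of the two fiberwise operators is obvious on tensors (the $\omega'_1$- and $\omega'_2$-integrations act on disjoint coordinate factors and trivially commute by Fubini), and extends by density. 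Finally, the equality with $\mathbb{E}(W|\mathcal{F}_k\cap\mathcal{G}_\ell)$ rests on the product-space identity $\mathcal{F}_k\cap\mathcal{G}_\ell = \mathcal{A}_k\otimes\mathcal{B}_\ell$ together with the standard fact that on a product measure space the conditional expectations with respect to $\mathcal{A}_k\otimes\mathcal{B}$ and $\mathcal{A}\otimes\mathcal{B}_\ell$ compose to the conditional expectation with respect to their intersection $\mathcal{A}_k\otimes\mathcal{B}_\ell$ (this is essentially Proposition~\ref{propcommuting} alluded to earlier).

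Part (c) is pure algebra. Set $M_j(\omega') := \mathbb{E}_{\mathcal{A}_{k+1}}^{\omega''\to\omega'}U_j(\omega'')$ and $m_j(\omega') := \mathbb{E}_{\mathcal{A}_k}^{\omega''\to\omega'}U_j(\omega'')$, so the integrand on the left equals $(M_1-m_1)(M_2-m_2)$. Expand, apply $\mathbb{E}(\,\cdot\,|\mathcal{A}_k)$, and use that $m_1,m_2$ are $\mathcal{A}_k$-measurable together with the tower identity $\mathbb{E}(M_j|\mathcal{A}_k) = m_j$; the cross and square terms collapse, leaving $\mathbb{E}(M_1M_2|\mathcal{A}_k) - m_1m_2$, which is exactly the right-hand side of \eqref{eqformulac}. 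The main obstacle is the commutativity/composition claim in part (b), since $\mathcal{F}_k$ and $\mathcal{G}_\ell$ are generally not independent; however the product-space structure still forces the two fiberwise conditional expectations to act on separate coordinate factors, so the commutation and the identification of the composition with $\mathbb{E}(\,\cdot\,|\mathcal{F}_k\cap\mathcal{G}_\ell)$ go through. Everything else is a routine Fubini/density argument.
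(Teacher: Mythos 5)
Your overall plan matches the paper's: verify (a) and (b) on tensors/rectangles, extend by density (the paper uses Dynkin's $\pi$-$\lambda$ theorem and simple-function approximation, you invoke density of linear combinations of tensors in $\mathrm{L}^1$ plus contractivity, which amounts to the same thing), and reduce (c) to an algebraic expansion using the martingale-difference property $\mathbb{E}(M_j|\mathcal{A}_k)=m_j$, which is precisely the paper's computation rearranged.

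The one place you should tighten up is the final equality in \eqref{eqformulab2}. You appeal to the ``product-space identity'' $\mathcal{F}_k\cap\mathcal{G}_\ell=\mathcal{A}_k\otimes\mathcal{B}_\ell$ and to the ``standard fact'' that $\mathbb{E}(\cdot|\mathcal{F}_k)\circ\mathbb{E}(\cdot|\mathcal{G}_\ell)=\mathbb{E}(\cdot|\mathcal{A}_k\otimes\mathcal{B}_\ell)$ as two independent known facts. But the inclusion $\mathcal{F}_k\cap\mathcal{G}_\ell\subseteq\mathcal{A}_k\otimes\mathcal{B}_\ell$ (which is what you need; the reverse inclusion is trivial) is not at all obvious, is at best true modulo null sets, and is really a \emph{consequence} of the composition fact rather than an ingredient to be cited alongside it: one shows $\mathbb{E}(\mathbf{1}_S|\mathcal{A}_k\otimes\mathcal{B}_\ell)=\mathbf{1}_S$ a.s.\ for $S\in\mathcal{F}_k\cap\mathcal{G}_\ell$ and then upgrades measurability. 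As written, your argument is circular in the sense that it shifts the burden onto an unproved auxiliary claim of essentially the same depth. The paper's route is cleaner and sidesteps $\mathcal{A}_k\otimes\mathcal{B}_\ell$ entirely: once the commutation $\mathbb{E}\big(\mathbb{E}(W|\mathcal{G}_\ell)\big|\mathcal{F}_k\big)=\mathbb{E}\big(\mathbb{E}(W|\mathcal{F}_k)\big|\mathcal{G}_\ell\big)$ is established on tensors and extended, one observes that this common random variable is simultaneously $\mathcal{F}_k$- and $\mathcal{G}_\ell$-measurable, hence $(\mathcal{F}_k\cap\mathcal{G}_\ell)$-measurable, and then checks the defining averaging property $\mathbb{E}(Z\,\mathbf{1}_C)=\mathbb{E}(W\,\mathbf{1}_C)$ for $C\in\mathcal{F}_k\cap\mathcal{G}_\ell$ by two applications of the tower rule. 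You should replace your appeal to the sigma-algebra identity with this direct verification; after that the proof is complete.
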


\begin{proof}[Proof of Lemma~\ref{lemmaformulae}]
(a) We begin by verifying equation \eqref{eqformulaa} in the special case when $V=\mathbf{1}_{S_1\times S_2}$ for some $S_1,S_2\in\mathcal{A}$:
\begin{align*}
& \mathbb{E}_{\mathcal{A}_{k}}^{\omega'\!\to\omega} \mathbb{E}_{\mathcal{A}_{k}}^{\omega''\!\to\omega'}
\mathbf{1}_{S_1}(\omega') \mathbf{1}_{S_2}(\omega'')
= \mathbb{E}_{\mathcal{A}_{k}}^{\omega'\!\to\omega} \big( \mathbf{1}_{S_1}(\omega') \mathbb{E}(\mathbf{1}_{S_2}|\mathcal{A}_k)(\omega') \big) \\
& = \mathbb{E}\big( \mathbf{1}_{S_1} \mathbb{E}(\mathbf{1}_{S_2}|\mathcal{A}_k) \big| \mathcal{A}_k \big)(\omega)
= \mathbb{E}(\mathbf{1}_{S_1}|\mathcal{A}_k)(\omega) \mathbb{E}(\mathbf{1}_{S_2}|\mathcal{A}_k)(\omega)
= \mathbb{E}_{\mathcal{A}_{k}}^{\omega'\!\to\omega} \mathbb{E}_{\mathcal{A}_{k}}^{\omega''\!\to\omega}
\mathbf{1}_{S_1}(\omega') \mathbf{1}_{S_2}(\omega'') .
\end{align*}
Next, we apply Dynkin's $\pi$-$\lambda$ theorem to extend the result to $V=\mathbf{1}_{S}$, $S\in\mathcal{A}\otimes\mathcal{A}$.
Finally, we use linearity and approximate by simple functions.

(b) Equations \eqref{eqformulab1} are just Fubini's theorem in disguise.
The first equality in \eqref{eqformulab2} is trivially verified for $W=\mathbf{1}_{S_1\times S_2}$, $S_1\in\mathcal{A}$, $S_2\in\mathcal{B}$
and then standard approximation arguments follow.
After establishing it, we can rewrite it using \eqref{eqformulab1} as
\begin{equation}\label{eqexpcommute}
\mathbb{E}\big(\mathbb{E}(W|\mathcal{G}_{\ell})\big|\mathcal{F}_{k}\big)
= \mathbb{E}\big(\mathbb{E}(W|\mathcal{F}_{k})\big|\mathcal{G}_{\ell}\big) .
\end{equation}
Since both sides are $(\mathcal{F}_{k}\cap\mathcal{G}_{\ell})$-measurable, they must also
be equal to $\mathbb{E}(W|\mathcal{F}_{k}\cap\mathcal{G}_{\ell})$.

(c) Subtracting the left hand side from the right hand side gives
\begin{align*}
& \mathbb{E}_{\mathcal{A}_{k}}^{\omega'\!\to\omega}
\Big(\big(\mathbb{E}_{\mathcal{A}_{k+1}}^{\omega''\!\to\omega'}U_1(\omega'')
- \mathbb{E}_{\mathcal{A}_{k}}^{\omega''\!\to\omega'}U_1(\omega'')\big)
\big(\mathbb{E}_{\mathcal{A}_{k}}^{\omega''\!\to\omega'}U_2(\omega'')\big) \\
& \qquad\quad + \big(\mathbb{E}_{\mathcal{A}_{k}}^{\omega''\!\to\omega'}U_1(\omega'')\big)
\big(\mathbb{E}_{\mathcal{A}_{k+1}}^{\omega''\!\to\omega'}U_2(\omega'')
- \mathbb{E}_{\mathcal{A}_{k}}^{\omega''\!\to\omega'}U_2(\omega'')\big)\Big) \\
& = \underbrace{\mathbb{E}\Big( \mathbb{E}(U_1|\mathcal{A}_{k+1})\!-\!\mathbb{E}(U_1|\mathcal{A}_{k}) \Big| \mathcal{A}_k \Big)(\omega)}_{=0}
\,\mathbb{E}(U_2|\mathcal{A}_{k})(\omega) \\
& \quad + \mathbb{E}(U_1|\mathcal{A}_{k})(\omega)
\,\underbrace{\mathbb{E}\Big( \mathbb{E}(U_2|\mathcal{A}_{k+1})\!-\!\mathbb{E}(U_2|\mathcal{A}_{k}) \Big| \mathcal{A}_k \Big)(\omega)}_{=0}
\ = 0 . \qedhere
\end{align*}
\end{proof}

\begin{proof}[Proof of Proposition~\ref{propbellman}]
Let us denote
$$ \gamma_{k}(V,W)(\omega_1,\omega_2) :=
\mathbb{E}_{\mathcal{A}_{k}}^{\omega'_1\!\to\omega_1}\mathbb{E}_{\mathcal{A}_{k}}^{\omega''_1\!\to\omega_1}
\big(\mathbb{E}_{\mathcal{B}_{k}}^{\omega'_2\!\to\omega_2} V(\omega'_1,\omega'_2) W(\omega''_1,\omega'_2) \big)^2 $$
for $k\geq 0$ and for $V,W\in\mathrm{L}^{4}(\mathbb{P}_1\!\times\!\mathbb{P}_2)$.
We can define explicitly
$$ \beta_{k}(X,Y,Z) := \frac{1}{2}\mathbb{E}(Y_k^2|\mathcal{F}_{k})
+ \frac{1}{2}\gamma_{k}(X,Z) + \frac{1}{4}\gamma_{k}(X,X) + \frac{1}{4}\gamma_{k}(Z,Z) . $$

Bound \eqref{eqdiscauxbeta} is verified directly.
Using the conditional Cauchy-Schwarz inequality and \eqref{eqformulab2} we obtain
\begin{align*}
\gamma_{k}(V,W)(\omega_1,\omega_2) & \leq
\mathbb{E}_{\mathcal{A}_{k}}^{\omega'_1\!\to\omega_1}\mathbb{E}_{\mathcal{A}_{k}}^{\omega''_1\!\to\omega_1}
\big(\mathbb{E}_{\mathcal{B}_{k}}^{\omega'_2\!\to\omega_2} V(\omega'_1,\omega'_2)^2\big)
\big(\mathbb{E}_{\mathcal{B}_{k}}^{\omega'_2\!\to\omega_2} W(\omega''_1,\omega'_2)^2\big) \\
& \leq \mathbb{E}(V^2|\mathcal{F}_{k}\cap\mathcal{G}_{k})(\omega_1,\omega_2)
\,\mathbb{E}(W^2|\mathcal{F}_{k}\cap\mathcal{G}_{k})(\omega_1,\omega_2) ,
\end{align*}
so that
\begin{align*}
& \gamma_{k}(X,Z) \leq \frac{1}{2}\,\mathbb{E}(X^2|\mathcal{F}_{k}\cap\mathcal{G}_{k})^{2}
+\frac{1}{2}\,\mathbb{E}(Z^2|\mathcal{F}_{k}\cap\mathcal{G}_{k})^{2}, \\
& \gamma_{k}(X,X) \leq \mathbb{E}(X^2|\mathcal{F}_{k}\cap\mathcal{G}_{k})^{2},
\quad \gamma_{k}(Z,Z) \leq \mathbb{E}(Z^2|\mathcal{F}_{k}\cap\mathcal{G}_{k})^{2} .
\end{align*}
We also need to observe
$$ \mathbb{E}(Y_{k}^{2}|\mathcal{F}_k) = \mathbb{E}\big(\mathbb{E}(Y|\mathcal{G}_k)^2\big|\mathcal{F}_k\big)
\leq \mathbb{E}\big(\mathbb{E}(Y^2|\mathcal{G}_k)\big|\mathcal{F}_k\big) = \mathbb{E}(Y^2|\mathcal{F}_k\cap\mathcal{G}_k) . $$

The most technical part of the proof is to establish \eqref{eqdiscauxalpha}.
For that purpose we transform $\alpha_{k}(X,Y,Z)$ using formulae \eqref{eqformulab2}, \eqref{eqformulab1}, and \eqref{eqformulaa} respectively
to obtain
{\allowdisplaybreaks\begin{align}
& \alpha_{k}(X,Y,Z)(\omega_1,\omega_2) \nonumber \\
& = \mathbb{E}\Big(\big(\mathbb{E}(Y|\mathcal{G}_{k+1})-\mathbb{E}(Y|\mathcal{G}_{k})\big)
\big(\mathbb{E}(X_{k}Z|\mathcal{G}_{k+1})-\mathbb{E}(X_{k}Z|\mathcal{G}_{k})\big)
\Big| \mathcal{F}_{k}\cap\mathcal{G}_{k}\Big)(\omega_1,\omega_2) \nonumber \\
& = \mathbb{E}_{\mathcal{A}_{k}}^{\omega'_1\!\to\omega_1} \mathbb{E}_{\mathcal{B}_{k}}^{\omega'_2\!\to\omega_2}
\big(\Delta_{\mathcal{B}_{k}}^{\omega''_2\!\to\omega'_2}Y(\omega'_1,\omega''_2)\big)
\Big(\Delta_{\mathcal{B}_{k}}^{\omega''_2\!\to\omega'_2}
\big(\mathbb{E}_{\mathcal{A}_{k}}^{\omega''_1\!\to\omega'_1}X(\omega''_1,\omega''_2)\big)Z(\omega'_1,\omega''_2)\Big) \nonumber \\
& = \mathbb{E}_{\mathcal{A}_{k}}^{\omega'_1\!\to\omega_1}
\mathbb{E}_{\mathcal{A}_{k}}^{\omega''_1\!\to\omega_1} \mathbb{E}_{\mathcal{B}_{k}}^{\omega'_2\!\to\omega_2}
\big(\Delta_{\mathcal{B}_{k}}^{\omega''_2\!\to\omega'_2}Y(\omega'_1,\omega''_2)\big)
\big(\Delta_{\mathcal{B}_{k}}^{\omega''_2\!\to\omega'_2}X(\omega''_1,\omega''_2)Z(\omega'_1,\omega''_2)\big) . \label{eqalphatransf}
\end{align}}
On the other hand, note that $\beta_{k}(X,Y,Z)$ is $(\mathcal{F}_{k}\cap\mathcal{G}_{k})$-measurable
and expand the right hand side of \eqref{eqdiscauxalpha} as
\begin{align*}
& \mathbb{E}\big(\beta_{k+1}(X,Y,Z)-\beta_{k}(X,Y,Z)\big|\mathcal{F}_{k}\cap\mathcal{G}_{k}\big) \\
& = \frac{1}{2}\,\mathbb{E}\big((Y_{k+1}\!-\!Y_{k})^2\big|\mathcal{F}_{k}\cap\mathcal{G}_{k}\big)
+ \frac{1}{2}\,\mathbb{E}\big(\gamma_{k+1}(X,Z)\!-\!\gamma_{k}(X,Z)\big|\mathcal{F}_{k}\cap\mathcal{G}_{k}\big) \\
& \ \ + \frac{1}{4}\,\mathbb{E}\big(\gamma_{k+1}(X,X)\!-\!\gamma_{k}(X,X)\big|\mathcal{F}_{k}\cap\mathcal{G}_{k}\big)
+ \frac{1}{4}\,\mathbb{E}\big(\gamma_{k+1}(Z,Z)\!-\!\gamma_{k}(Z,Z)\big|\mathcal{F}_{k}\cap\mathcal{G}_{k}\big) .
\end{align*}
If we denote
\begin{align*}
\delta_{k}(V,W)(\omega_1,\omega_2)
& := \mathbb{E}\big(\gamma_{k+1}(V,W)\!-\!\gamma_{k}(V,W)\big|\mathcal{F}_{k}\cap\mathcal{G}_{k}\big)(\omega_1,\omega_2) \\
& \,= \mathbb{E}_{\mathcal{A}_{k}}^{\omega'_1\!\to\omega_1}\mathbb{E}_{\mathcal{B}_{k}}^{\omega'_2\!\to\omega_2}
\Big(\mathbb{E}_{\mathcal{A}_{k+1}}^{\omega''_1\!\to\omega'_1}\mathbb{E}_{\mathcal{A}_{k+1}}^{\omega'''_1\!\to\omega'_1}
\big(\mathbb{E}_{\mathcal{B}_{k+1}}^{\omega''_2\!\to\omega'_2} V(\omega''_1,\omega''_2) W(\omega'''_1,\omega''_2) \big)^2 \\
& \qquad\qquad\qquad\qquad -\mathbb{E}_{\mathcal{A}_{k}}^{\omega''_1\!\to\omega'_1}\mathbb{E}_{\mathcal{A}_{k}}^{\omega'''_1\!\to\omega'_1}
\big(\mathbb{E}_{\mathcal{B}_{k}}^{\omega''_2\!\to\omega'_2} V(\omega''_1,\omega''_2) W(\omega'''_1,\omega''_2) \big)^2\Big) ,
\end{align*}
then we can write
\begin{align}
& \mathbb{E}\big(\beta_{k+1}(X,Y,Z)-\beta_{k}(X,Y,Z)\big|\mathcal{F}_{k}\cap\mathcal{G}_{k}\big) \nonumber \\
& = \frac{1}{2}\,\mathbb{E}\big((Y_{k+1}\!-\!Y_k)^2 \big| \mathcal{F}_k\cap\mathcal{G}_k \big)
+ \frac{1}{2}\delta_{k}(X,Z) + \frac{1}{4}\delta_{k}(X,X) + \frac{1}{4}\delta_{k}(Z,Z) . \label{eqdiscrhs}
\end{align}

Now we turn back to $\alpha_k$ and estimate it until we arrive at the expressions above.
Begin by applying the simple inequality $|ab|\leq\frac{1}{2}a^2+\frac{1}{2}b^2$ to \eqref{eqalphatransf},
\begin{align*}
& |\alpha_{k}(X,Y,Z)(\omega_1,\omega_2)| \\
& \leq \frac{1}{2}\,\mathbb{E}_{\mathcal{A}_{k}}^{\omega'_1\!\to\omega_1}
\mathbb{E}_{\mathcal{B}_{k}}^{\omega'_2\!\to\omega_2}
\big(\Delta_{\mathcal{B}_{k}}^{\omega''_2\!\to\omega'_2}Y(\omega'_1,\omega''_2)\big)^2 \\
& \ \ + \frac{1}{2}\,\underbrace{\mathbb{E}_{\mathcal{A}_{k}}^{\omega'_1\!\to\omega_1}
\mathbb{E}_{\mathcal{A}_{k}}^{\omega''_1\!\to\omega_1} \mathbb{E}_{\mathcal{B}_{k}}^{\omega'_2\!\to\omega_2}
\big(\Delta_{\mathcal{B}_{k}}^{\omega''_2\!\to\omega'_2}X(\omega''_1,\omega''_2)Z(\omega'_1,\omega''_2)\big)^2}_{\varepsilon_{k}(X,Z)}.
\end{align*}
The first term is exactly
\,$\frac{1}{2}\mathbb{E}\big((Y_{k+1}\!-\!Y_k)^2 \big| \mathcal{F}_k\cap\mathcal{G}_k \big)(\omega_1,\omega_2)$,\,
while we denote $2$ times the second term by $\varepsilon_{k}(X,Z)(\omega_1,\omega_2)$.
Repeated applications of formulae \eqref{eqformulac} and \eqref{eqformulab2} allow us to write
{\allowdisplaybreaks\begin{align*}
& \varepsilon_{k}(X,Z)(\omega_1,\omega_2) \\
& = \mathbb{E}_{\mathcal{A}_{k}}^{\omega'_1\!\to\omega_1}
\mathbb{E}_{\mathcal{A}_{k}}^{\omega''_1\!\to\omega_1} \mathbb{E}_{\mathcal{B}_{k}}^{\omega'_2\!\to\omega_2}
\Big(\big(\mathbb{E}_{\mathcal{B}_{k+1}}^{\omega''_2\!\to\omega'_2}X(\omega''_1,\omega''_2)Z(\omega'_1,\omega''_2)\big)^2 \\
& \qquad\qquad\qquad\qquad\qquad\quad
-\big(\mathbb{E}_{\mathcal{B}_{k}}^{\omega''_2\!\to\omega'_2}X(\omega''_1,\omega''_2)Z(\omega'_1,\omega''_2)\big)^2\Big) \\
& = \mathbb{E}_{\mathcal{B}_{k}}^{\omega'_2\!\to\omega_2}
\big(\mathbb{E}_{\mathcal{B}_{k+1}}^{\omega''_2\!\to\omega'_2}\mathbb{E}_{\mathcal{B}_{k+1}}^{\omega'''_2\!\to\omega'_2}
-\mathbb{E}_{\mathcal{B}_{k}}^{\omega''_2\!\to\omega'_2}\mathbb{E}_{\mathcal{B}_{k}}^{\omega'''_2\!\to\omega'_2}\big) \\
& \qquad\qquad\  \big(\mathbb{E}_{\mathcal{A}_{k}}^{\omega''_1\!\to\omega_1}X(\omega''_1,\omega''_2)X(\omega''_1,\omega'''_2)\big)
\big(\mathbb{E}_{\mathcal{A}_{k}}^{\omega'_1\!\to\omega_1}Z(\omega'_1,\omega''_2)Z(\omega'_1,\omega'''_2)\big) \\
& = \mathbb{E}_{\mathcal{B}_{k}}^{\omega'_2\!\to\omega_2}
\big(\mathbb{E}_{\mathcal{B}_{k+1}}^{\omega''_2\!\to\omega'_2}\mathbb{E}_{\mathcal{B}_{k+1}}^{\omega'''_2\!\to\omega'_2}
-\mathbb{E}_{\mathcal{B}_{k}}^{\omega''_2\!\to\omega'_2}\mathbb{E}_{\mathcal{B}_{k}}^{\omega'''_2\!\to\omega'_2}\big)
\,\mathbb{E}_{\mathcal{A}_{k}}^{\omega'_1\!\to\omega_1} \\
& \qquad\qquad\  \big(\mathbb{E}_{\mathcal{A}_{k}}^{\omega''_1\!\to\omega'_1}X(\omega''_1,\omega''_2)X(\omega''_1,\omega'''_2)\big)
\big(\mathbb{E}_{\mathcal{A}_{k}}^{\omega'''_1\!\to\omega'_1}Z(\omega'''_1,\omega''_2)Z(\omega'''_1,\omega'''_2)\big) \\
& = -\, \mathbb{E}_{\mathcal{B}_{k}}^{\omega'_2\!\to\omega_2}
\mathbb{E}_{\mathcal{B}_{k}}^{\omega''_2\!\to\omega'_2}\mathbb{E}_{\mathcal{B}_{k}}^{\omega'''_2\!\to\omega'_2}
\,\mathbb{E}_{\mathcal{A}_{k}}^{\omega'_1\!\to\omega_1} \\
& \qquad\qquad\ \Big( \big(\mathbb{E}_{\mathcal{A}_{k}}^{\omega''_1\!\to\omega'_1}X(\omega''_1,\omega''_2)X(\omega''_1,\omega'''_2)\big)
\big(\mathbb{E}_{\mathcal{A}_{k}}^{\omega'''_1\!\to\omega'_1}Z(\omega'''_1,\omega''_2)Z(\omega'''_1,\omega'''_2)\big) \\
& \quad\, + \mathbb{E}_{\mathcal{B}_{k}}^{\omega'_2\!\to\omega_2}
\mathbb{E}_{\mathcal{B}_{k+1}}^{\omega''_2\!\to\omega'_2}\mathbb{E}_{\mathcal{B}_{k+1}}^{\omega'''_2\!\to\omega'_2}
\,\mathbb{E}_{\mathcal{A}_{k}}^{\omega'_1\!\to\omega_1} \\
& \qquad\qquad\ \Big( \big(\mathbb{E}_{\mathcal{A}_{k+1}}^{\omega''_1\!\to\omega'_1}X(\omega''_1,\omega''_2)X(\omega''_1,\omega'''_2)\big)
\big(\mathbb{E}_{\mathcal{A}_{k+1}}^{\omega'''_1\!\to\omega'_1}Z(\omega'''_1,\omega''_2)Z(\omega'''_1,\omega'''_2)\big) \\
& \qquad\qquad\ - \big(\Delta_{\mathcal{A}_{k}}^{\omega''_1\!\to\omega'_1}X(\omega''_1,\omega''_2)X(\omega''_1,\omega'''_2)\big)
\big(\Delta_{\mathcal{A}_{k}}^{\omega'''_1\!\to\omega'_1}Z(\omega'''_1,\omega''_2)Z(\omega'''_1,\omega'''_2)\big) \Big) \\
& = \mathbb{E}_{\mathcal{A}_{k}}^{\omega'_1\!\to\omega_1}\mathbb{E}_{\mathcal{B}_{k}}^{\omega'_2\!\to\omega_2}
\Big( \mathbb{E}_{\mathcal{A}_{k+1}}^{\omega''_1\!\to\omega'_1} \mathbb{E}_{\mathcal{A}_{k+1}}^{\omega'''_1\!\to\omega'_1}
\big(\mathbb{E}_{\mathcal{B}_{k+1}}^{\omega''_2\!\to\omega'_2} X(\omega''_1,\omega''_2)Z(\omega'''_1,\omega''_2)\big)^2 \\
& \quad\,\underbrace{\qquad\qquad\qquad\quad - \mathbb{E}_{\mathcal{A}_{k}}^{\omega''_1\!\to\omega'_1} \mathbb{E}_{\mathcal{A}_{k}}^{\omega'''_1\!\to\omega'_1}
\big(\mathbb{E}_{\mathcal{B}_{k}}^{\omega''_2\!\to\omega'_2} X(\omega''_1,\omega''_2)Z(\omega'''_1,\omega''_2)\big)^2 \Big)}_{\delta_{k}(X,Z)} \\
& \ \ -\, \mathbb{E}_{\mathcal{B}_{k}}^{\omega'_2\!\to\omega_2}
\mathbb{E}_{\mathcal{B}_{k+1}}^{\omega''_2\!\to\omega'_2}\mathbb{E}_{\mathcal{B}_{k+1}}^{\omega'''_2\!\to\omega'_2}
\,\mathbb{E}_{\mathcal{A}_{k}}^{\omega'_1\!\to\omega_1} \\
& \qquad\underbrace{\qquad\ \Big( \big(\Delta_{\mathcal{A}_{k}}^{\omega''_1\!\to\omega'_1}X(\omega''_1,\omega''_2)X(\omega''_1,\omega'''_2)\big)
\big(\Delta_{\mathcal{A}_{k}}^{\omega'''_1\!\to\omega'_1}Z(\omega'''_1,\omega''_2)Z(\omega'''_1,\omega'''_2)\big) \Big)}_{\zeta_{k}(X,Z)}.
\end{align*}}
The first term is precisely $\delta_{k}(X,Z)(\omega_1,\omega_2)$ and we denote the negative of the second term by
$\zeta_{k}(X,Z)(\omega_1,\omega_2)$, so that
\begin{equation}\label{eqdisclhs1}
|\alpha_{k}(X,Y,Z)| \leq \frac{1}{2}\,\mathbb{E}\big((Y_{k+1}\!-\!Y_k)^2 \big| \mathcal{F}_k\cap\mathcal{G}_k \big)
+ \frac{1}{2}\delta_{k}(X,Z) + \frac{1}{2}|\zeta_{k}(X,Z)| .
\end{equation}
Finally, we deal with $\zeta_{k}(X,Z)$ by estimating it as
\begin{equation}\label{eqdiscszetaeta}
|\zeta_{k}(X,Z)| \leq \frac{1}{2}\eta_{k}(X) + \frac{1}{2}\eta_{k}(Z) ,
\end{equation}
where
$$ \eta_{k}(X)(\omega_1,\omega_2) := \mathbb{E}_{\mathcal{B}_{k}}^{\omega'_2\!\to\omega_2}
\mathbb{E}_{\mathcal{B}_{k+1}}^{\omega''_2\!\to\omega'_2}\mathbb{E}_{\mathcal{B}_{k+1}}^{\omega'''_2\!\to\omega'_2}
\,\mathbb{E}_{\mathcal{A}_{k}}^{\omega'_1\!\to\omega_1}
\big(\Delta_{\mathcal{A}_{k}}^{\omega''_1\!\to\omega'_1}X(\omega''_1,\omega''_2)X(\omega''_1,\omega'''_2)\big)^2 . $$
We transform $\eta_{k}(X)$ using \eqref{eqformulac} twice, similarly as we did before with $\varepsilon_{k}(X,Z)$,
{\allowdisplaybreaks\begin{align*}
& \eta_{k}(X)(\omega_1,\omega_2) \\
& = \mathbb{E}_{\mathcal{B}_{k}}^{\omega'_2\!\to\omega_2}
\mathbb{E}_{\mathcal{B}_{k+1}}^{\omega''_2\!\to\omega'_2}\mathbb{E}_{\mathcal{B}_{k+1}}^{\omega'''_2\!\to\omega'_2}
\,\mathbb{E}_{\mathcal{A}_{k}}^{\omega'_1\!\to\omega_1} \\
& \qquad\Big( \big(\mathbb{E}_{\mathcal{A}_{k+1}}^{\omega''_1\!\to\omega'_1}X(\omega''_1,\omega''_2)X(\omega''_1,\omega'''_2)\big)^2
- \big(\mathbb{E}_{\mathcal{A}_{k}}^{\omega''_1\!\to\omega'_1}X(\omega''_1,\omega''_2)X(\omega''_1,\omega'''_2)\big)^2 \Big) \\
& = \mathbb{E}_{\mathcal{A}_{k}}^{\omega'_1\!\to\omega_1}
\big(\mathbb{E}_{\mathcal{A}_{k+1}}^{\omega''_1\!\to\omega'_1}\mathbb{E}_{\mathcal{A}_{k+1}}^{\omega'''_1\!\to\omega'_1}
-\mathbb{E}_{\mathcal{A}_{k}}^{\omega''_1\!\to\omega'_1}\mathbb{E}_{\mathcal{A}_{k}}^{\omega'''_1\!\to\omega'_1}\big)
\,\mathbb{E}_{\mathcal{B}_{k}}^{\omega'_2\!\to\omega_2}
\big(\mathbb{E}_{\mathcal{B}_{k+1}}^{\omega''_2\!\to\omega'_2}X(\omega''_1,\omega''_2)X(\omega'''_1,\omega''_2)\big)^2 \\
& = \mathbb{E}_{\mathcal{A}_{k}}^{\omega'_1\!\to\omega_1}
\mathbb{E}_{\mathcal{A}_{k+1}}^{\omega''_1\!\to\omega'_1}\mathbb{E}_{\mathcal{A}_{k+1}}^{\omega'''_1\!\to\omega'_1}
\mathbb{E}_{\mathcal{B}_{k}}^{\omega'_2\!\to\omega_2}
\big(\mathbb{E}_{\mathcal{B}_{k+1}}^{\omega''_2\!\to\omega'_2}X(\omega''_1,\omega''_2)X(\omega'''_1,\omega''_2)\big)^2 \\
& \ \ - \mathbb{E}_{\mathcal{A}_{k}}^{\omega'_1\!\to\omega_1}
\mathbb{E}_{\mathcal{A}_{k}}^{\omega''_1\!\to\omega'_1}\mathbb{E}_{\mathcal{A}_{k}}^{\omega'''_1\!\to\omega'_1}
\mathbb{E}_{\mathcal{B}_{k}}^{\omega'_2\!\to\omega_2} \\
& \qquad\Big(\big(\mathbb{E}_{\mathcal{B}_{k}}^{\omega''_2\!\to\omega'_2}X(\omega''_1,\omega''_2)X(\omega'''_1,\omega''_2)\big)^2
+ \big(\Delta_{\mathcal{B}_{k}}^{\omega''_2\!\to\omega'_2}X(\omega''_1,\omega''_2)X(\omega'''_1,\omega''_2)\big)^2\Big) \\
& = \delta_{k}(X,X) \,-\,
\underbrace{\mathbb{E}_{\mathcal{A}_{k}}^{\omega''_1\!\to\omega_1}\mathbb{E}_{\mathcal{A}_{k}}^{\omega'''_1\!\to\omega_1}
\mathbb{E}_{\mathcal{B}_{k}}^{\omega'_2\!\to\omega_2}
\big(\Delta_{\mathcal{B}_{k}}^{\omega''_2\!\to\omega'_2}X(\omega''_1,\omega''_2)X(\omega'''_1,\omega''_2)\big)^2}_{\geq 0}.
\end{align*}}
Therefore
\begin{equation}\label{eqdisclhs2}
\eta_{k}(X) \leq \delta_{k}(X,X), \quad \eta_{k}(Z) \leq \delta_{k}(Z,Z) .
\end{equation}
Combining \eqref{eqdiscrhs}--\eqref{eqdisclhs2} we complete the proof of \eqref{eqdiscauxalpha}.
\end{proof}

\section{Proof of Theorem 1}
\label{secdiscrete}

Now we are ready to establish inequalities \eqref{eqdiscestdualized} and \eqref{eqdiscestdualized2}, which in turn will establish
Theorem \ref{theoremdiscrete}, as we have already observed.
We will find it convenient to write
$$ \mathcal{H}_k := \mathcal{F}_k\cap\mathcal{G}_k , $$
since the filtration $(\mathcal{H}_k)_{k=0}^{\infty}$ plays a prominent role in our proof.
We can certainly assume that none of the variables $X,Y,Z$ are constantly zero.

By taking expectation of \eqref{eqdiscauxalpha}, summing in $k$, and telescoping we obtain
$$ \sum_{k=0}^{n-1}\mathbb{E}|\alpha_{k}(X,Y,Z)| \leq \mathbb{E}\beta_{n}(X,Y,Z) - \mathbb{E}\beta_{0}(X,Y,Z)
\leq \mathbb{E}\beta_{n}(X,Y,Z) . $$
Consequently, by \eqref{eqsplitintoalphas},
$$ \big|\mathbb{E}\big((X\cdot Y)_n \,Z\big)\big| \leq \mathbb{E}\beta_{n}(X,Y,Z) . $$
Then by taking expectation of \eqref{eqdiscauxbeta} we get with an aid of conditional Jensen's inequality
\begin{align*}
\mathbb{E}\beta_{n}(X,Y,Z) & \leq \frac{1}{2}\,\mathbb{E}
\Big(\mathbb{E}(X^4|\mathcal{H}_n) + \mathbb{E}(Y^2|\mathcal{H}_n) + \mathbb{E}(Z^4|\mathcal{H}_n)\Big) \\
& = \frac{1}{2}\|X\|_{\mathrm{L}^4}^4 + \frac{1}{2}\|Y\|_{\mathrm{L}^2}^2 + \frac{1}{2}\|Z\|_{\mathrm{L}^4}^4 .
\end{align*}
It remains to use homogeneity of $\mathbb{E}((X\cdot Y)_n Z)$ and replace $X,Y,Z$ by
$\frac{X}{\|X\|_{\mathrm{L}^4}}, \frac{Y}{\|Y\|_{\mathrm{L}^2}}, \frac{Z}{\|Z\|_{\mathrm{L}^4}}$, so that
$$ \frac{\big|\mathbb{E}\big((X\cdot Y)_n \,Z\big)\big|}{\|X\|_{\mathrm{L}^4}\|Y\|_{\mathrm{L}^2}\|Z\|_{\mathrm{L}^4}}\leq\frac{3}{2} . $$
This finally establishes \eqref{eqdiscestdualized} and hence also \eqref{eqdiscreteestimate1}.

The proof of \eqref{eqdiscestdualized2} will be reduced by Proposition \ref{propbellman}
to a slightly more complicated but still standard stopping time argument, which we adapt from \cite{Thi} or \cite{Kov2}.
Let us fix two stopping times $\sigma$ and $\tau$ with respect to the filtration $(\mathcal{H}_k)_{k=0}^{\infty}$ such that $\sigma\leq\tau\leq n$.
Splitting with respect to all possible values of $\sigma,\tau$ by repeated applications of \eqref{eqdiscauxalpha} we obtain
$$ \mathbb{E}\Big(\sum_{k\in[\sigma,\tau)}|\alpha_{k}(X,Y,Z)|\Big|\mathcal{H}_{\sigma}\Big)
\leq \mathbb{E}\big(\beta_{\tau}(X,Y,Z)\big|\mathcal{H}_{\sigma}\big) - \beta_{\sigma}(X,Y,Z) . $$
Since $\beta_{\sigma}(X,Y,Z)\geq 0$ and both sides vanish outside
$$ \{\sigma<\tau\} \subseteq \{\sigma<n\}\cap\{\tau>0\} , $$
taking expectations we actually get
\begin{equation}\label{eqstoptimesineq0}
\mathbb{E}\sum_{k\in[\sigma,\tau)}|\alpha_{k}(X,Y,Z)| \,\leq\,
\big\|\beta_{\tau}(X,Y,Z)\mathbf{1}_{\{\tau>0\}}\big\|_{\mathrm{L}^{\infty}} \,\mathbb{P}(\sigma<n) .
\end{equation}

Because of Proposition \ref{propbellman} it is natural to
introduce three martingales with respect to $(\mathcal{H}_{k})_{k=0}^{\infty}$ defined by
$$ \mathcal{X}_k := \mathbb{E}(X^2|\mathcal{H}_{k}),
\quad \mathcal{Y}_k := \mathbb{E}(Y^2|\mathcal{H}_{k}),
\quad \mathcal{Z}_k := \mathbb{E}(Z^2|\mathcal{H}_{k}) $$
and the corresponding maximal processes
$$ \bar{\mathcal{X}}_k := \max_{0\leq \ell\leq k}\mathcal{X}_\ell,
\quad \bar{\mathcal{Y}}_k := \max_{0\leq \ell\leq k}\mathcal{Y}_\ell,
\quad \bar{\mathcal{Z}}_k := \max_{0\leq \ell\leq k}\mathcal{Z}_\ell . $$
Using \eqref{eqdiscauxbeta} Inequality \eqref{eqstoptimesineq0} now becomes
$$ \mathbb{E} \sum_{k\in[\sigma,\tau)} |\alpha_{k}(X,Y,Z)| \,\leq\, \frac{1}{2}
\,\Big(\big\|\mathcal{X}_\tau\mathbf{1}_{\{\tau>0\}}\big\|_{\mathrm{L}^\infty}^2
+ \big\|\mathcal{Y}_\tau\mathbf{1}_{\{\tau>0\}}\big\|_{\mathrm{L}^\infty}
+ \big\|\mathcal{Z}_\tau\mathbf{1}_{\{\tau>0\}}\big\|_{\mathrm{L}^\infty}^2\Big) \,\mathbb{P}(\sigma<n) . $$
We can actually establish a seemingly stronger inequality
\begin{equation}\label{eqstoptimesineq}
\mathbb{E} \sum_{k\in[\sigma,\tau)} |\alpha_{k}(X,Y,Z)| \,\leq\, \frac{3}{2}
\,\big\|\mathcal{X}_\tau\mathbf{1}_{\{\tau>0\}}\big\|_{\mathrm{L}^\infty}^{1/2}
\big\|\mathcal{Y}_\tau\mathbf{1}_{\{\tau>0\}}\big\|_{\mathrm{L}^\infty}^{1/2}
\big\|\mathcal{Z}_\tau\mathbf{1}_{\{\tau>0\}}\big\|_{\mathrm{L}^\infty}^{1/2} \,\mathbb{P}(\sigma<n) ,
\end{equation}
first under the normalization
$$ \big\|\mathcal{X}_\tau\mathbf{1}_{\{\tau>0\}}\big\|_{\mathrm{L}^\infty}
= \big\|\mathcal{Y}_\tau\mathbf{1}_{\{\tau>0\}}\big\|_{\mathrm{L}^\infty}
= \big\|\mathcal{Z}_\tau\mathbf{1}_{\{\tau>0\}}\big\|_{\mathrm{L}^\infty}=1 $$
and then in the general case, by homogeneity of $\alpha_{k}(X,Y,Z)$ in each of the variables $X,Y,Z$.

For each $m\in\mathbb{Z}$ we introduce a stopping time $T^{\mathcal{X}}_{m}$ with respect to $(\mathcal{H}_{k})_{k=0}^{\infty}$ by
$$ T^{\mathcal{X}}_{m} := \inf\{k\geq 0 : \mathcal{X}_k\geq 2^{2m}\} \wedge n $$
and define $T^{\mathcal{Y}}_{m}$ and $T^{\mathcal{Z}}_{m}$ analogously.
It is easy to observe that $T^{\mathcal{X}}_{m-1}=T^{\mathcal{X}}_{m}$ for all but finitely many $m$.
Indeed, if $2^{m}\leq\mathcal{X}_0^{1/2}$, then $T^{\mathcal{X}}_{m}=0$, while
if $2^{m}>\|X\|_{\mathrm{L}^\infty}$, then $T^{\mathcal{X}}_{m}=n$.
Consequently, we have only finitely many nonempty random intervals of the form
\begin{equation}\label{eqrandomintervals}
[T^{\mathcal{X}}_{m_1-1},T^{\mathcal{X}}_{m_1}) \cap
[T^{\mathcal{Y}}_{m_2-1},T^{\mathcal{Y}}_{m_2}) \cap
[T^{\mathcal{Z}}_{m_3-1},T^{\mathcal{Z}}_{m_3})
\end{equation}
for $m_1,m_2,m_3\in\mathbb{Z}$ and they constitute a random partition of $\{0,1,2,\ldots,n-1\}$.

Let us apply \eqref{eqstoptimesineq} to each random interval $[\sigma,\tau)$ of the form \eqref{eqrandomintervals}, i.e.\@ when
$$ \sigma = T^{\mathcal{X}}_{m_1-1} \vee T^{\mathcal{Y}}_{m_2-1} \vee T^{\mathcal{Z}}_{m_3-1} ,
\quad \tau = T^{\mathcal{X}}_{m_1} \wedge T^{\mathcal{Y}}_{m_2} \wedge T^{\mathcal{Z}}_{m_3} . $$
Observe that on the set $\{\tau=k\}$, $k\geq 1$, Condition \eqref{eqjumpscondition} implies
$$ \mathcal{X}_\tau = \mathcal{X}_k \leq A \,\|\mathcal{X}_{k-1}\|_{\mathrm{L}^{\infty}} \leq A\, 2^{2m_1} \ \textup{ a.s.} , $$
so that
$$ \big\|\mathcal{X}_\tau\mathbf{1}_{\{\tau>0\}}\big\|_{\mathrm{L}^\infty}\leq 2^{2m_1} A, \quad
\big\|\mathcal{Y}_\tau\mathbf{1}_{\{\tau>0\}}\big\|_{\mathrm{L}^\infty}\leq 2^{2m_2} A, \quad
\big\|\mathcal{Z}_\tau\mathbf{1}_{\{\tau>0\}}\big\|_{\mathrm{L}^\infty}\leq 2^{2m_3} A . $$
Also note that
\begin{align*}
\{\sigma < n\}
& = \big\{T^{\mathcal{X}}_{m_1-1}<n\big\}\cap\big\{T^{\mathcal{Y}}_{m_2-1}<n\big\}\cap\big\{T^{\mathcal{Z}}_{m_3-1}<n\big\} \\
& \subseteq \big\{\bar{\mathcal{X}}_{n}\geq2^{2m_1-2}\big\}\cap\big\{\bar{\mathcal{Y}}_{n}\geq2^{2m_2-2}\big\}\cap
\big\{\bar{\mathcal{Z}}_{n}\geq2^{2m_3-2}\big\} .
\end{align*}
Summing over all $m_1,m_2,m_3$ and using \eqref{eqstoptimesineq} we obtain
\begin{align}
\mathbb{E} \sum_{k=0}^{n-1} |\alpha_{k}(X,Y,Z)| \,\leq \
& \frac{3}{2}\, A^{3/2} \!\!\sum_{m_1,m_2,m_3\in\mathbb{Z}} 2^{m_1+m_2+m_3} \nonumber \\
& \min\big\{\mathbb{P}\big(\bar{\mathcal{X}}_{n}\geq2^{2m_1-2}\big),
\mathbb{P}\big(\bar{\mathcal{Y}}_{n}\geq2^{2m_2-2}\big),
\mathbb{P}\big(\bar{\mathcal{Z}}_{n}\geq2^{2m_3-2}\big)\big\} . \label{eqbigsum}
\end{align}
This time we decide to normalize
$$ \|X\|_{\mathrm{L}^{p}} = \|Y\|_{\mathrm{L}^{q}} = \|Z\|_{\mathrm{L}^{r'}} = 1. $$
Recall that $2<p,q,r'<\infty$, so Doob's inequality gives
$$ \sum_{m\in\mathbb{Z}} 2^{mp} \,\mathbb{P}(\bar{\mathcal{X}}_{n}\geq2^{2m})
\leq C'_p \mathbb{E}|\bar{\mathcal{X}}_{n}|^{p/2} \leq C_p \mathbb{E}|\mathcal{X}_{n}|^{p/2} \leq C_p \|X\|_{\mathrm{L}^{p}}^{p} = C_p $$
and similarly
$$ \sum_{m\in\mathbb{Z}} 2^{mq} \,\mathbb{P}(\bar{\mathcal{Y}}_{n}\geq2^{2m}) \leq C_q ,
\quad \sum_{m\in\mathbb{Z}} 2^{mr'} \mathbb{P}(\bar{\mathcal{Z}}_{n}\geq2^{2m}) \leq C_r $$
for some constants $C_p,C_q,C_r$ depending only on the exponents.
In order to control the right hand side in \eqref{eqbigsum} we split the summation range $\mathbb{Z}^3$ into
three subsets, depending on which of the numbers $p m_1,q m_2,r'm_3$ is the largest.
By symmetry it is enough to bound one of the sub-sums.
\begin{align*}
& \sum_{\substack{m_1,m_2,m_3\in\mathbb{Z}\\ p m_1\geq q m_2,\ p m_1\geq r' m_3}} 2^{m_1+m_2+m_3}
\min\big\{\mathbb{P}\big(\bar{\mathcal{X}}_{n}\geq2^{2m_1-2}\big),
\mathbb{P}\big(\bar{\mathcal{Y}}_{n}\geq2^{2m_2-2}\big),
\mathbb{P}\big(\bar{\mathcal{Z}}_{n}\geq2^{2m_3-2}\big)\big\} \\
& \leq \sum_{m_1\in\mathbb{Z}} 2^{p m_1}
\Big(\sum_{\substack{m_2\in\mathbb{Z}\\ m_2\leq (p/q)m_1}} 2^{m_2-(p/q)m_1}\Big)
\Big(\sum_{\substack{m_3\in\mathbb{Z}\\ m_3\leq (p/r')m_1}} 2^{m_3-(p/r')m_1}\Big)
\,\mathbb{P}\big(\bar{\mathcal{X}}_{n}\geq2^{2m_1-2}\big) \\
& \leq 4 \sum_{m_1\in\mathbb{Z}} 2^{p m_1} \mathbb{P}\big(\bar{\mathcal{X}}_{n}\geq2^{2m_1-2}\big)
\leq 2^{p+2}\, C_p .
\end{align*}
Using homogeneity of $\mathbb{E}((X\cdot Y)_n Z)$ once again we complete the proof of \eqref{eqdiscestdualized2}.

\section{Proof of Corollary 2}

Under the hypotheses of Subsection \ref{subsecparaprod}, Stein and Wainger \cite[II.1]{SW} constructed quasinorms
$\rho^{(j)}\colon\mathbb{R}^{d_j}\to [0,\infty)$, $j=1,2$ compatible with dilations, i.e.
$$ \rho^{(j)}(\delta_{t}^{(j)}x) = t \,\rho^{(j)}(x) \textup{ for } x\in\mathbb{R}^d,\ t>0,\ j=1,2 , $$
and such that $\mathbb{R}^{d_j}$ equipped with the $d_j$-dimensional Lebesgue measure $|\cdot|$ and the quasimetric coming from $\rho^{(j)}$
turns into a space of homogeneous type.
By this notion we understand that the Lebesgue measure is finite on $\rho^{(j)}$-balls and possesses the doubling property,
$$ \big|\mathrm{B}_{\rho^{(j)}}(x,2r)| \leq M \,|\mathrm{B}_{\rho^{(j)}}(x,r)\big| $$
for $x\in\mathbb{R}^{d_j}$, $r>0$, $j=1,2$, with some absolute constant $M$.
Therefore, we can use the construction due to Christ \cite{Chr} of the so-called \emph{dyadic cubes},
which works in the setting of a general space of homogeneous type.
There exists two collections
$$ \big\{Q_{k,i}^{(j)} \,:\, k\in\mathbb{Z},\ i\in I_k^{(j)}\big\},\ \ j=1,2,\ \
I_k^{(j)}\textup{ are countable sets of indices},  $$
of $\rho^{(j)}$-open sets $Q_{k,i}^{(j)}$ and constants $0<\gamma_1,\gamma_2<1$, \,$\varepsilon>0$, \,$M'<\infty$ with the following properties.
\begin{itemize}
\item For fixed $k\in\mathbb{Z}$ and $j\in\{1,2\}$ the sets $\{Q_{k,i}^{(j)} \,:\, i\in I_k^{(j)}\}$
form a countable partition of $\mathbb{R}^{d_j}$ up to sets of measure zero.
\item For any $k,k'\in\mathbb{Z}$, $k>k'$, $i\in I_k^{(j)}$, $i'\in I_{k'}^{(j)}$, $j\in\{1,2\}$
either $Q_{k,i}^{(j)}\subseteq Q_{k',i'}^{(j)}$\linebreak or $Q_{k,i}^{(j)}\cap Q_{k',i'}^{(j)}=\emptyset$.
\item For any $k,k'\in\mathbb{Z}$, $k>k'$, $i\in I_k^{(j)}$, $j\in\{1,2\}$
there is a unique $i'\in I_{k'}^{(j)}$ such that $Q_{k,i}^{(j)}\subseteq Q_{k',i'}^{(j)}$.
\item For any $k\in\mathbb{Z}$, $i\in I_k^{(j)}$, $j\in\{1,2\}$ there is a point $x_{k,i}^{(j)}\in\mathbb{R}^{d_j}$ such that
$$ \mathrm{B}_{\rho^{(j)}}\big(x_{k,i}^{(j)},\gamma_{j}^{k}\big)\subseteq Q_{k,i}^{(j)}\subseteq
\mathrm{B}_{\rho^{(j)}}\big(x_{k,i}^{(j)},M'\gamma_{j}^{k}\big) . $$
\item If $k\in\mathbb{Z}$, $i\in I_k^{(j)}$, $i'\in I_{k+1}^{(j)}$, $j\in\{1,2\}$ are such that $Q_{k+1,i'}^{(j)}\subseteq Q_{k,i}^{(j)}$, then
$$ \big|Q_{k+1,i'}^{(j)}\big|\geq \varepsilon\big|Q_{k,i}^{(j)}\big| . $$
\item For $k\in\mathbb{Z}$, $i\in I_k^{(j)}$, $j\in\{1,2\}$, and $\vartheta>0$ one has
$$ \big|\big\{x\in Q_{k,i}^{(j)} \,:\, \rho^{(j)}\big(x,\mathbb{R}^{d_j}\!\setminus\! Q_{k,i}^{(j)}\big)<\vartheta\gamma_{j}^{k}\big\}\big|
\leq M'\vartheta^\varepsilon \big|Q_{k,i}^{(j)}\big| . $$
\end{itemize}
We will choose $\alpha=\gamma_1$, $\beta=\gamma_2$.

For each $k\in\mathbb{Z}$ one can consider $\sigma$-algebras
$$ \mathcal{A}_k := \sigma\big(\big\{Q_{k,i}^{(1)} \,:\, i\in I_k^{(1)}\big\}\big), \quad
\mathcal{B}_k := \sigma\big(\big\{Q_{k,i}^{(2)} \,:\, i\in I_k^{(2)}\big\}\big) $$
on $\mathbb{R}^{d_1}$, $\mathbb{R}^{d_2}$ respectively
and then let $\mathcal{F}_k$ and $\mathcal{G}_k$ be $\sigma$-algebras on $\mathbb{R}^d$ defined by \eqref{eqfiltrations}.
Observe that the Lebesgue measure on $\mathbb{R}^d$ is not finite, but if restrict our attention to a single large ``product cube''
$Q_{k,i}^{(1)}\times Q_{k,i'}^{(2)}$, then we can certainly normalize the measure to obtain a probability space.
Conditional expectations with respect to $\mathcal{F}_k$ and $\mathcal{G}_k$ are simply
\begin{align*}
\mathbb{E}(f|\mathcal{F}_k)(x,y) & = \big|Q_{k,i_1}^{(1)}\big|^{-1}\!\int_{Q_{k,i_1}^{(1)}} f(u,y) du , \\
\mathbb{E}(f|\mathcal{G}_k)(x,y) & = \big|Q_{k,i_2}^{(2)}\big|^{-1}\!\int_{Q_{k,i_2}^{(2)}} f(x,v) dv ,
\end{align*}
where $i_1\in I_k^{(1)}$, $i_2\in I_k^{(2)}$ are the a.e.-unique indices such that $x\in Q_{k,i_1}^{(1)}$, $y\in Q_{k,i_2}^{(2)}$.
These filtrations $(\mathcal{F}_k)$, $(\mathcal{G}_k)$ satisfy Condition \eqref{eqjumpscondition} because for $f\geq 0$ and
$$ (x,y) \in Q_{k+1,i'_1}^{(1)}\!\times\! Q_{k+1,i'_2}^{(2)} \subseteq Q_{k,i_1}^{(1)}\!\times\! Q_{k,i_2}^{(2)} $$
we have
\begin{align*}
& \mathbb{E}(f|\mathcal{F}_{k+1}\cap\mathcal{G}_{k+1})(x,y)
\,=\, \big|Q_{k+1,i'_1}^{(1)}\big|^{-1} \big|Q_{k+1,i'_2}^{(2)}\big|^{-1}\!
\int_{Q_{k+1,i'_1}^{(1)}\!\times\! Q_{k+1,i'_2}^{(2)}} f(u,v) \,du dv \\
& \leq \big(\varepsilon\big|Q_{k,i_1}^{(1)}\big|\big)^{-1} \big(\varepsilon\big|Q_{k,i_2}^{(2)}\big|\big)^{-1}\!
\int_{Q_{k,i_1}^{(1)}\!\times\! Q_{k,i_2}^{(2)}} f(u,v) \,du dv
\,=\, \varepsilon^{-2} \,\mathbb{E}(f|\mathcal{F}_{k}\cap\mathcal{G}_{k})(x,y) ,
\end{align*}
so we can take $A=\varepsilon^{-2}$.

Fix the exponents $p,q,r$ as in the statement of Corollary \ref{cortwisted} and also take
$f\in\mathrm{L}^{p}(\mathbb{R}^d)$, $g\in\mathrm{L}^{q}(\mathbb{R}^d)$.
In order to prove Inequality \eqref{eqtwistedest} for $T_{\alpha,\beta}$, it is enough to establish the same estimate for the partial sums
$$ T_{n}(f,g) := \sum_{k=0}^{n-1} \big(\mathrm{P}_{k}^{(1)}f\big) \,\big(\mathrm{P}_{k+1}^{(2)}g-\mathrm{P}_{k}^{(2)}g\big) $$
with a constant independent of $n$.
Then we can ``shift'' the scales and finally extend from a finite range of indices $k$ to the whole $\mathbb{Z}$ by a limiting argument.
Theorem \ref{theoremdiscrete} (b) can be applied to
$$ X_k = \mathbb{E}(f|\mathcal{F}_k),\ \ Y_k = \mathbb{E}(g|\mathcal{G}_k),\ \ K_k=1 $$
and it yields
\begin{equation}\label{eqmartinparaprod}
\big\|\widetilde{T}_{n}(f,g)\big\|_{\mathrm{L}^r(\mathbb{R}^{d})}
\leq C_{p,q,r} A^{3/2} \|f\|_{\mathrm{L}^p(\mathbb{R}^{d})} \|g\|_{\mathrm{L}^q(\mathbb{R}^{d})}
\end{equation}
for the operator
$$ \widetilde{T}_{n}(f,g) := \sum_{k=0}^{n-1} \mathbb{E}(f|\mathcal{F}_k) \big(\mathbb{E}(g|\mathcal{G}_{k+1})-\mathbb{E}(g|\mathcal{G}_k)\big) $$
More precisely, one has to apply \eqref{eqdiscreteestimate3} to $(X_k)$ and $(Y_k)$ localized and normalized to a single ``large'' product cube,
because we were working in a probability space in the previous sections.
The estimate extends to the whole $\mathbb{R}^d$ due to the ``proper'' scaling $1/r = 1/p + 1/q$.
These transference arguments are standard.

Once we have Inequality \eqref{eqmartinparaprod}, it is enough to bound the difference $T_{n}(f,g)-\widetilde{T}_{n}(f,g)$.
In this last step we use the square function estimate of Jones, Seeger, and Wright \cite[\S 4, pp. 6725]{JSW},
which controls the difference between convolutions and conditional expectations.
In our setting and notation, the result from \cite{JSW} reads
\begin{equation}\label{eqjswineq}
\|\mathcal{S}_{\textup{JSW}}^{(1)}f\|_{\mathrm{L}^p(\mathbb{R}^d)} \leq C_p \|f\|_{\mathrm{L}^p(\mathbb{R}^d)} ,
\quad \|\mathcal{S}_{\textup{JSW}}^{(2)}g\|_{\mathrm{L}^q(\mathbb{R}^d)} \leq C_q \|g\|_{\mathrm{L}^q(\mathbb{R}^d)} ,
\end{equation}
where
$$ \mathcal{S}_{\textup{JSW}}^{(1)}f := \Big(\sum_{k\in\mathbb{Z}} \big|\mathrm{P}_{k}^{(1)}f - \mathbb{E}(f|\mathcal{F}_{k})\big|^2\Big)^{1/2} ,
\quad \mathcal{S}_{\textup{JSW}}^{(2)}g := \Big(\sum_{k\in\mathbb{Z}} \big|\mathrm{P}_{k}^{(2)}g - \mathbb{E}(g|\mathcal{G}_{k})\big|^2\Big)^{1/2} . $$
Clearly,
\begin{align*}
T_{n}(f,g)-\widetilde{T}_{n}(f,g) =\,
& \sum_{k=0}^{n-1} \big(\mathrm{P}_{k}^{(1)}f-\mathbb{E}(f|\mathcal{F}_k)\big)
\,\big(\mathrm{P}_{k+1}^{(2)}g-\mathrm{P}_{k}^{(2)}g\big) \\
& - \sum_{k=0}^{n-1} \big(\mathbb{E}(f|\mathcal{F}_{k+1})-\mathbb{E}(f|\mathcal{F}_k)\big)
\,\big(\mathrm{P}_{k+1}^{(2)}g-\mathbb{E}(g|\mathcal{G}_{k+1})\big) \\
& + \mathbb{E}(f|\mathcal{F}_n) \big(\mathrm{P}_{n}^{(2)}g-\mathbb{E}(g|\mathcal{G}_n)\big)
- \mathbb{E}(f|\mathcal{F}_0) \big(\mathrm{P}_{0}^{(2)}g-\mathbb{E}(g|\mathcal{G}_0)\big) ,
\end{align*}
so that by the Cauchy-Schwarz inequality
\begin{align*}
\big|T_{n}(f,g)-\widetilde{T}_{n}(f,g)\big| \leq\,
& \big(\mathcal{S}_{\textup{JSW}}^{(1)}f\big) \big(\mathcal{S}_{\textup{conv}}^{(2)}g\big)
+ \big(\mathcal{S}_{\textup{mart}}^{(1)}f\big) \big(\mathcal{S}_{\textup{JSW}}^{(2)}g\big) \\
& + |\mathbb{E}(f|\mathcal{F}_n)| \big(|\mathrm{P}_{n}^{(2)}g|+|\mathbb{E}(g|\mathcal{G}_n)|\big)
+ |\mathbb{E}(f|\mathcal{F}_0)| \big(|\mathrm{P}_{0}^{(2)}g|+|\mathbb{E}(g|\mathcal{G}_0)|\big) ,
\end{align*}
where we have also denoted
$$ \mathcal{S}_{\textup{conv}}^{(2)}g
:= \Big(\sum_{k\in\mathbb{Z}} \big|\mathrm{P}_{k+1}^{(2)}g - \mathrm{P}_{k}^{(2)}g\big|^2\Big)^{1/2},
\quad \mathcal{S}_{\textup{mart}}^{(1)}f
:= \Big(\sum_{k\in\mathbb{Z}} \big|\mathbb{E}(f|\mathcal{F}_{k+1}) - \mathbb{E}(f|\mathcal{F}_{k})\big|^2\Big)^{1/2} . $$
The ``smooth'' square function $\mathcal{S}_{\textup{conv}}^{(2)}$ is known to be bounded (see \cite{DRF}),
while $\mathcal{S}_{\textup{mart}}^{(1)}$ is bounded by the Burkholder-Davis-Gundy inequality \cite{BDG},
\begin{equation}\label{eqstandardsqfn}
\|\mathcal{S}_{\textup{conv}}^{(2)}g\|_{\mathrm{L}^q(\mathbb{R}^d)} \leq C'_q \|g\|_{\mathrm{L}^q(\mathbb{R}^d)},
\quad \|\mathcal{S}_{\textup{mart}}^{(1)}f\|_{\mathrm{L}^p(\mathbb{R}^d)} \leq C'_p \|f\|_{\mathrm{L}^p(\mathbb{R}^d)} .
\end{equation}
Taking $\mathrm{L}^r$-norms we finally get
\begin{align*}
\big\|T_{n}(f,g)-\widetilde{T}_{n}(f,g)\big\|_{\mathrm{L}^{r}(\mathbb{R}^{d})} \leq\,
& \big\|\mathcal{S}_{\textup{JSW}}^{(1)}f\big\|_{\mathrm{L}^{p}(\mathbb{R}^{d})}
\big\|\mathcal{S}_{\textup{conv}}^{(2)}g\big\|_{\mathrm{L}^{q}(\mathbb{R}^{d})} \\
& + \big\|\mathcal{S}_{\textup{mart}}^{(1)}f\big\|_{\mathrm{L}^{p}(\mathbb{R}^{d})}
\big\|\mathcal{S}_{\textup{JSW}}^{(2)}g\big\|_{\mathrm{L}^{q}(\mathbb{R}^{d})} \\
& + 4 \|f\|_{\mathrm{L}^{p}(\mathbb{R}^{d})} \|g\|_{\mathrm{L}^{q}(\mathbb{R}^{d})}
\end{align*}
and it remains to use \eqref{eqjswineq} and \eqref{eqstandardsqfn}.

Let us remark that the particular case of the result in \cite{Kov2} did not really need the full generality of
the result from \cite{JSW}, as the convolution-type paraproduct was compared with its martingale variant
with respect to the standard one-dimensional dyadic grids only.

\section{Proof of Corollaries 3 and 4}
\label{secfoundation}

Let $(\mathcal{F}_s)_{s\geq 0}$, $(\mathcal{G}_s)_{s\geq 0}$, $(X_s)_{s\geq 0}$, $(Y_s)_{s\geq 0}$, $(H_s)_{s\geq 0}$
be as in Subsection \ref{subsecintegral} and fix $t>0$.
Both of our results are immediate consequences of discrete-time Estimate \eqref{eqdiscreteestimate}.
We find it convenient to prove them simultaneously.

Take an elementary predictable integrand \eqref{eqelementaryprocess} associated with some partition \eqref{eqpartition} of $[0,t]$.
We can apply \eqref{eqdiscreteestimate} to the processes $X=(X_{t_k})_{k=0}^{n}$, $Y=(Y_{t_k})_{k=0}^{n}$, and $K=(K_k)_{k=0}^{n-1}$.
Observing that by the discrete-time It\={o} isometry
$$ \|H\|_{X,Y,t}^{2} = \|(K\cdot X)_n\|_{\mathrm{L}^2}^{2} + \|(K\cdot Y)_n\|_{\mathrm{L}^2}^{2} , $$
we establish \eqref{eqcontest}. The second claim of Corollary~\ref{corito} then follows from \eqref{eqcontest} and linearity.

If we also assume $\|K_k\|_{\mathrm{L}^\infty}\leq 1$ for $k=0,1,\ldots,n-1$, then from Definition \eqref{eqseminorm} we have
\begin{align*}
\|H\|_{X,Y,t}^{2}\, & \leq\, \mathbb{E}\,\sum_{k=1}^{n}(X_{t_k}\!-\!X_{t_{k-1}})^2
+\mathbb{E}\,\sum_{k=1}^{n}(Y_{t_k}\!-\!Y_{t_{k-1}})^2 \\
& =\, \mathbb{E}\,\big(X_{t}^2-X_{0}^2+Y_{t}^2-Y_{0}^2\big)
\leq \|X_t\|_{\mathrm{L}^2}^{2} + \|Y_t\|_{\mathrm{L}^2}^{2} .
\end{align*}
Using \eqref{eqcontest} we obtain the following bound on the $\mathrm{L}^{4/3}$-norm of \eqref{eqstochint},
\begin{equation}\label{eqbdauxest}
\Big\|\int_{0}^{t}H_s d(X_s Y_s)\Big\|_{\mathrm{L}^{4/3}} \leq
C \,\big(\|X_t\|_{\mathrm{L}^2}^{2} + \|Y_t\|_{\mathrm{L}^2}^{2}\big)^{1/2}
\big(\|X_t\|_{\mathrm{L}^4}+\|Y_t\|_{\mathrm{L}^4}\big) .
\end{equation}
The right hand side of \eqref{eqbdauxest} is finite because we have assumed $X_t,Y_t\in\mathrm{L}^{4}$ and the proof is complete.

\section{Additional remarks}
\label{secclosing}

We want to prove that $\sigma$-algebras $\mathcal{F}_k$ and $\mathcal{G}_\ell$ defined in the introduction
are independent conditionally on their intersection for any times $k$ and $\ell$.
Indeed, this is a consequence of the more general fact that their conditional expectations commute,
as we show in the next proposition.

\begin{proposition}\label{propcommuting}
The following statements are equivalent for $\sigma$-algebras $\mathcal{F}$ and $\mathcal{G}$ on the same probability space.
\begin{itemize}
\item[(a)]
$\mathbb{E}\big(\mathbb{E}(X|\mathcal{F})|\mathcal{G}\big) = \mathbb{E}\big(\mathbb{E}(X|\mathcal{G})|\mathcal{F}\big)$
for each $X\in\mathrm{L}^1$.
\item[(b)]
$\mathcal{F}$ and $\mathcal{G}$ are independent conditionally on $\mathcal{F}\cap\mathcal{G}$, i.e.
$$ \mathbb{P}(A\cap B|\mathcal{F}\cap\mathcal{G}) = \mathbb{P}(A|\mathcal{F}\cap\mathcal{G}) \,\mathbb{P}(B|\mathcal{F}\cap\mathcal{G}) $$
whenever $A\in\mathcal{F}$ and $B\in\mathcal{G}$.
\end{itemize}
\end{proposition}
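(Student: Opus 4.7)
\textbf{Plan of proof of Proposition~\ref{propcommuting}.}
The strategy is to introduce the intersection $\mathcal{H}:=\mathcal{F}\cap\mathcal{G}$ and use the common value
$\mathbb{E}\big(\mathbb{E}(X|\mathcal{F})|\mathcal{G}\big)=\mathbb{E}\big(\mathbb{E}(X|\mathcal{G})|\mathcal{F}\big)$
as a bridge. More specifically, I plan to prove the auxiliary equivalence that either of (a), (b) is in turn equivalent to
\emph{(c) $\mathbb{E}\big(\mathbb{E}(X|\mathcal{F})|\mathcal{G}\big)=\mathbb{E}(X|\mathcal{H})$ for every $X\in\mathrm{L}^1$.}
Running (a) $\Rightarrow$ (c) $\Rightarrow$ (b) $\Rightarrow$ (c) $\Rightarrow$ (a) then closes the loop.

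For (a) $\Rightarrow$ (c), the key observation is that if both sides of (a) agree, then the resulting random variable is simultaneously $\mathcal{F}$-measurable and $\mathcal{G}$-measurable, hence $\mathcal{H}$-measurable. I would then check the defining property of $\mathbb{E}(X|\mathcal{H})$ by integrating against $\mathbf{1}_C$ for $C\in\mathcal{H}\subseteq\mathcal{G}$ and applying the tower property twice. To get (c) $\Rightarrow$ (b), fix $A\in\mathcal{F}$ and $B\in\mathcal{G}$ and compute
\[
\mathbb{E}(\mathbf{1}_{A\cap B}|\mathcal{H})
=\mathbb{E}\big(\mathbf{1}_B\,\mathbb{E}(\mathbf{1}_A|\mathcal{G})\big|\mathcal{H}\big)
=\mathbb{E}\big(\mathbf{1}_B\,\mathbb{E}(\mathbf{1}_A|\mathcal{H})\big|\mathcal{H}\big)
=\mathbb{E}(\mathbf{1}_A|\mathcal{H})\,\mathbb{E}(\mathbf{1}_B|\mathcal{H}),
\]
where the first step uses $B\in\mathcal{G}$, the second applies (c) to the $\mathcal{F}$-measurable indicator $\mathbf{1}_A$ (after invoking (c) with $X=\mathbf{1}_A$, which yields $\mathbb{E}(\mathbf{1}_A|\mathcal{G})=\mathbb{E}(\mathbf{1}_A|\mathcal{H})$), and the last pulls out the $\mathcal{H}$-measurable factor.

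For (b) $\Rightarrow$ (c), the plan is to first establish the pointwise identity $\mathbb{E}(\mathbf{1}_A|\mathcal{G})=\mathbb{E}(\mathbf{1}_A|\mathcal{H})$ for each $A\in\mathcal{F}$, which is the heart of the argument. Since the right hand side is $\mathcal{H}$-measurable and a fortiori $\mathcal{G}$-measurable, I only need to verify $\mathbb{E}(\mathbf{1}_A\mathbf{1}_B)=\mathbb{E}(\mathbb{E}(\mathbf{1}_A|\mathcal{H})\mathbf{1}_B)$ for every $B\in\mathcal{G}$; using conditional independence on the left and pulling out an $\mathcal{H}$-measurable factor on the right both reduce to $\mathbb{E}\big(\mathbb{P}(A|\mathcal{H})\mathbb{P}(B|\mathcal{H})\big)$. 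Linearity and standard monotone/dominated approximation then extend this to $\mathbb{E}(Y|\mathcal{G})=\mathbb{E}(Y|\mathcal{H})$ for all $\mathcal{F}$-measurable $Y\in\mathrm{L}^1$, and applying this with $Y=\mathbb{E}(X|\mathcal{F})$ together with the tower property gives (c). Finally (c) $\Rightarrow$ (a) is immediate by symmetry in $\mathcal{F}$ and $\mathcal{G}$.

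The only genuinely delicate point is the step (b) $\Rightarrow$ (c), because conditional independence a priori only controls the joint behavior of one $\mathcal{F}$-event and one $\mathcal{G}$-event, whereas we need a statement about the $\mathcal{G}$-conditional expectation of a single $\mathcal{F}$-event. The trick above circumvents this by testing against arbitrary $\mathbf{1}_B$ with $B\in\mathcal{G}$ and recognizing both sides as the expectation of the product $\mathbb{P}(A|\mathcal{H})\mathbb{P}(B|\mathcal{H})$; the rest is routine approximation.
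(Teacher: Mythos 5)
Your proof is correct and follows essentially the same route as the paper's: both arguments hinge on recognizing that the common value in (a) equals $\mathbb{E}(X|\mathcal{F}\cap\mathcal{G})$ (your condition (c)), and both directions then proceed by testing against an indicator and using the tower property / conditional independence, exactly as you describe. The only loose spot is your closing remark that ``(c) $\Rightarrow$ (a) is immediate by symmetry'': condition (c) by itself is not symmetric in $\mathcal{F}$ and $\mathcal{G}$, so what you really mean is that (b) is symmetric, hence (b) yields both (c) and its $\mathcal{F}\leftrightarrow\mathcal{G}$-swapped counterpart by the same argument, and these two together give (a); rephrasing the last step that way makes the cycle airtight.
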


\begin{proof}
(a)$\Rightarrow$(b):
We noticed in the proof of Lemma~\ref{lemmaformulae} that Condition (a) also implies
$$ \mathbb{E}\big(\mathbb{E}(X|\mathcal{F})|\mathcal{G}\big) = \mathbb{E}\big(\mathbb{E}(X|\mathcal{G})|\mathcal{F}\big)
= \mathbb{E}(X|\mathcal{F}\cap\mathcal{G}) . $$
Applying this to the indicator function of any $B\in\mathcal{G}$ gives
$$ \mathbb{E}(\mathbf{1}_{B}|\mathcal{F})
= \mathbb{E}\big(\mathbb{E}(\mathbf{1}_{B}|\mathcal{G})|\mathcal{F}\big)
= \mathbb{E}(\mathbf{1}_{B}|\mathcal{F}\cap\mathcal{G}) $$
and thus for $A\in\mathcal{F}$ we have
\begin{align*}
\mathbb{E}(\mathbf{1}_{A}\mathbf{1}_{B}|\mathcal{F}\cap\mathcal{G})
& = \mathbb{E}\big(\mathbb{E}(\mathbf{1}_{A}\mathbf{1}_{B}|\mathcal{F})\big|\mathcal{F}\cap\mathcal{G}\big)
= \mathbb{E}\big(\mathbf{1}_{A}\mathbb{E}(\mathbf{1}_{B}|\mathcal{F})\big|\mathcal{F}\cap\mathcal{G}\big)\\
& = \mathbb{E}\big(\mathbf{1}_{A}\mathbb{E}(\mathbf{1}_{B}|\mathcal{F}\cap\mathcal{G})\big|\mathcal{F}\cap\mathcal{G}\big)
= \mathbb{E}(\mathbf{1}_{A}|\mathcal{F}\cap\mathcal{G}) \,\mathbb{E}(\mathbf{1}_{B}|\mathcal{F}\cap\mathcal{G}) .
\end{align*}

(b)$\Rightarrow$(a): Take $X\in\mathrm{L}^1$. By symmetry it is enough to show
$\mathbb{E}\big(\mathbb{E}(X|\mathcal{F})|\mathcal{G}\big) = \mathbb{E}(X|\mathcal{F}\cap\mathcal{G})$.
The independence condition applied to $\mathbb{E}(X|\mathcal{F})$ and $B\in\mathcal{G}$ yields
\begin{align*}
\mathbb{E}\big(\mathbb{E}(X|\mathcal{F})\mathbf{1}_{B}\big|\mathcal{F}\cap\mathcal{G}\big)
& = \mathbb{E}(X|\mathcal{F}\cap\mathcal{G}) \,\mathbb{E}(\mathbf{1}_{B}|\mathcal{F}\cap\mathcal{G})\\
& = \mathbb{E}\big(\mathbb{E}(X|\mathcal{F}\cap\mathcal{G})\mathbf{1}_{B}\big|\mathcal{F}\cap\mathcal{G}\big) .
\end{align*}
Taking expectation gives
$$ \mathbb{E}\big(\mathbb{E}(X|\mathcal{F})\mathbf{1}_{B}\big)
= \mathbb{E}\big(\mathbb{E}(X|\mathcal{F}\cap\mathcal{G})\mathbf{1}_{B}\big) , $$
which proves the claim.
\end{proof}

Recall that Condition (a) was verified for $\mathcal{F}_k$ and $\mathcal{G}_\ell$ in Lemma~\ref{lemmaformulae};
see Equation \eqref{eqexpcommute}.

\smallskip
Now we turn to a simple example showing that the construction described in Subsection \ref{subsecintegral} cannot be
realized along the lines of the classical one.
Suppose that $(A_t)_{t\geq 0}$ and $(B_t)_{t\geq 0}$ are standard one-dimensional Brownian motions constructed on
$(\Omega_1,\mathcal{A},\mathbb{P}_1)$ and $(\Omega_2,\mathcal{B},\mathbb{P}_2)$ respectively.
Let the two filtrations on the product space be given by \eqref{eqfiltrations2}, so that
$$ \mathcal{F}_t\cap\mathcal{G}_t = \sigma\big( \big\{A_s,B_s : 0\leq s\leq t \big\}\big) $$
is just the natural filtration of a two-dimensional Brownian motion.
Define
\begin{equation}\label{eqexample}
X_t(\omega_1,\omega_2) := A_t(\omega_1)V(\omega_2), \quad Y_t(\omega_1,\omega_2) := U(\omega_1)B_t(\omega_2),
\end{equation}
for some $\mathcal{A}$-measurable random variable $U$ and some $\mathcal{B}$-measurable random variable $V$.
At any time $t=\varepsilon>0$ we have
$$ (X_\varepsilon Y_\varepsilon)(\omega_1,\omega_2) = (A_\varepsilon U)(\omega_1) (B_\varepsilon V)(\omega_2) . $$
Since $\mathbb{P}_{1}(A_\varepsilon=0)=\mathbb{P}_{2}(B_\varepsilon=0)=0$, we see that $X_\varepsilon Y_\varepsilon$
can be equal a.s.\@ to an indicator function of any $C\times D\in\mathcal{A}\otimes\mathcal{B}$.
This in turn implies that (the completion of) the $\sigma$-algebra generated by such products $X_\varepsilon Y_\varepsilon$
can contain the whole product $\sigma$-algebra for any fixed time $\varepsilon>0$.
We see that the process $(X_t Y_t)_{t\geq 0}$ need not be adapted with respect to $(\mathcal{F}_t\cap\mathcal{G}_t)_{t\geq 0}$.
On the other extreme, the paths of $(X_t Y_t)_{t\geq 0}$ do not necessarily have bounded variation,
which is seen by an even simpler counterexample
$$ X_t(\omega_1,\omega_2) := A_t(\omega_1), \quad Y_t(\omega_1,\omega_2) := 1 . $$
Example \eqref{eqexample} also rules out the possibility of a canonical way of decomposing $(X_t Y_t)_{t\geq 0}$
into a sum of a finite variation process and a local martingale.
For instance, a choice that works for continuous martingales with respect to the same Brownian filtration
\begin{align*}
& X_t Y_t = \langle X_t,Y_t\rangle + M_t, \\
& \langle X_t,Y_t\rangle:=\lim_{m\to\infty}\sum_{k=1}^{m}
\big(X_{\frac{kt}{m}}-X_{\frac{(k-1)t}{m}}\big)\big(Y_{\frac{kt}{m}}-Y_{\frac{(k-1)t}{m}}\big)
\quad\textup{in probability}
\end{align*}
does not necessarily give a process $(M_t)_{t\geq 0}$ adapted to $(\mathcal{F}_t\cap\mathcal{G}_t)_{t\geq 0}$.

\section*{Acknowledgments}
We would like to thank Professor Zoran Vondra\v{c}ek for suggestions on how to clarify the presented material
and Professor Hrvoje \v{S}iki\'{c} for a useful discussion.

\end{document}